\newtheorem{thm}{Theorem}[section]
\newtheorem{cor}[thm]{Corollary}
\newtheorem{lem}[thm]{Lemma}
\newtheorem{pro}[thm]{Proposition}
\theoremstyle{definition}
\newtheorem{defi}[thm]{Definition}
\newtheoremstyle{remarque}{}{}{}{}{\it}{.}{\newline}{}
\theoremstyle{remarque}
\newtheorem*{rem}{Remark}
\newtheorem*{rems}{Remarks}
\newcommand{\asd}[5]{%
\setbox1=\hbox{\ensuremath{^{#1}}}%
\setbox2=\hbox{\ensuremath{_{#2}}}%
\setbox5=\hbox{\ensuremath{#5}}%
\hspace{\ifnum\wd1>\wd2\wd1\else\wd2\fi}%
\ensuremath{\copy5^{\hspace{-\wd1}\hspace{-\wd5}#1\hspace{\wd5}#3}%
_{\hspace{-\wd2}\hspace{-\wd5}#2\hspace{\wd5}#4}%
}}
\DeclareSymbolFont{cyrletters}{OT2}{wncyr}{m}{n}
\DeclareMathSymbol{\Sha}{\mathalpha}{cyrletters}{"58}
\DeclareMathSymbol{\Brusse}{\mathalpha}{cyrletters}{"42}
\newcommand{\id}{\mathrm{id}}
\renewcommand{\ker}{\mathrm{Ker }}
\newcommand{\im}{\mathrm{Im}}
\newcommand{\ext}{\mathrm{Ext}}
\newcommand{\res}{\mathrm{Res}}
\newcommand{\aut}{\mathrm{Aut}}
\newcommand{\out}{\mathrm{Out}}
\newcommand{\saut}{\mathrm{SAut}}
\newcommand{\sout}{\mathrm{SOut}}
\renewcommand{\int}{\mathrm{int}}
\newcommand{\Int}{\mathrm{Int}}
\newcommand{\gm}{\mathbb{G}_{\mathrm{m}}}
\newcommand{\gal}{\mathrm{Gal}}
\newcommand{\red}{\mathrm{red}}
\newcommand{\spec}{\mathrm{Spec}\,}
\newcommand{\bb}[1]{\mathbb{#1}}
\newcommand{\f}{\mathfrak{f}}
\newcommand{\g}{\mathfrak{g}}
\newcommand{\z}{\mathfrak{z}}
\title{Extensions of algebraic groups with finite quotient and nonabelian 2-cohomology}
\author{Giancarlo Lucchini Arteche\\[5mm]
{\it\small Centre de Math\'ematiques Laurent Schwartz, UMR CNRS 7640}\\
{\it\small \'Ecole polytechnique, 91128 Palaiseau, France}\\
{\small giancarlo.lucchini-arteche@polytechnique.edu }
}
\date{}
\numberwithin{equation}{subsection}
\begin{document}

\maketitle

\begin{abstract}
For a finite smooth algebraic group $F$ over a field $k$ and a smooth algebraic group $\bar G$ over the separable closure of $k$, we define the notion of $F$-kernel in $\bar G$ and we associate to it a set of nonabelian 2-cohomology. We use this to study extensions of $F$ by an arbitrary smooth $k$-group $G$. We show in particular that any such extension comes from an extension of finite $k$-groups when $k$ is perfect and we give explicit bounds on the order of these finite groups when $G$ is linear. We prove moreover some finiteness results on these sets.\\

{\bf Keywords :} Algebraic groups, Non abelian cohomology.

{\bf MSC classes (2010):} 20J06, 18G50, 14L99.
\end{abstract}

\section{Introduction}
It is a well known fact in abstract group theory that group extensions
\[1\to G\to E\to F\to 1,\]
where $G$ is an abelian group and $F$ is an arbitrary group, are classified by what are called \emph{factor systems}. In the modern language of cohomology theory, these factor systems turn out to be 2-cocycles for the group action of $F$ on $G$ given by conjugation in $E$. Extensions like the one above are thus classified by the group cohomology set $H^2(F,G)$ (cf.~for example \cite[IV.4]{MacLane}). In a nonabelian context (that is, when $G$ is also arbitrary), one loses the action of $F$ over $G$ given by conjugation in $E$ but can still define the notion of \emph{$F$-kernel} (cf.~\cite[IV.8]{MacLane}) which allows a description of the set of extensions.\\

Such a description can be generalized to the case of topological groups if one is careful enough to impose the good conditions on the 2-cocycles one works with, a natural condition being for instance continuity of all considered maps. This was already done for example by Springer in \cite{SpringerH2}, where he considers the action of the absolute Galois group of a perfect field $k$ on the $k_s$-points of an algebraic group over $k$, where $k_s$ denotes a separable closure of $k$, obtaining thus the notion of \emph{$k$-kernel}. These ideas were later resumed (and treated with much more precision and care) by Borovoi in the case of linear groups over a field of characteristic 0 (cf.~\cite{Borovoi93}) and by Flicker, Scheiderer and Sujatha in the more general case of smooth $k$-groups for arbitrary $k$ (cf. \cite{FSS}). With such tools one can prove interesting results, as for example Springer's result stating the existence, for an arbitrary homogeneous space under an algebraic group over a perfect field of cohomological dimension $\leq 1$, of a principal homogenenous space surjecting onto it; or Borovoi's abelianization of the nonabelian Galois cohomology and its known consequences in the arithmetic study of homogeneous spaces.\\

However, if one wishes to extend such theories to algebraic groups \emph{acting on other algebraic groups} (having thus in mind studying their extensions), serious problems appear. For starters, an algebraic group, when considered as a group scheme over the base field $k$, brings with it not a single group, but infinitely many (one for each scheme over $k$), making it difficult to define the notion of kernel (let alone that of 2-cocycle) in this context in a ``naive'' way (see however \cite{DemarcheHochschild}). These difficulties can be overcome in the case where the group that acts is a \emph{finite} smooth $k$-group. This is done by a slight generalization of the nonabelian Galois cohomology cited above.

The interest for doing so is not only based on the simple question of understanding algebraic group extensions, but also on the fact that one can use these tools in order to exhibit finite subgroups of arbitrary algebraic groups which are defined over the base field and intersect every connected component of the group. This has already been used by the author in the study of Brauer groups of homogeneous spaces with non connected stabilizer (cf.~\cite{GLABrnral2}), but also for example by Gille and Reichstein (cf.~\cite{GilleReichstein}) and by L\"otscher, MacDonald, Meyer and Reichstein (cf.~\cite{Reichstein et cie}) in the study of essential dimension for linear algebraic groups. In the latter, an important issue is to control the prime numbers dividing the order of the finite group obtained. We thus try to control this order in our main result in this direction (Theorem \ref{theoreme reduction ext}).

The existence of such subgroups had already been stated by Borel and Serre for a perfect field $k$, although they only gave the proof for linear $G$ and $k=k_s$ of characteristic zero (cf.~\cite[Lem.~5.11 and footnote on p.~152]{BorelSerre}). This result was extended shortly after by Platonov to the case of a perfect field, but still for linear groups (cf.~\cite[Lem.~4.14]{Platonov}). Finally, the same assertion has also been recently proved by Brion in an even more general setting (cf.~\cite[Thm.~1]{Brion}), although with no explicit bounds on the order of the finite groups thus obtained.\\

The structure of this article is then as follows:

In section \ref{section}, we recall the basic theory of kernels and nonabelian 2-cohomology (\S\ref{section k-liens}) as well as that of actions by group automorphisms (\S\ref{section k-actions}). We define then the notion of an $F$-kernel in $\bar G$ for a finite smooth $k$-group $F$ and a smooth $k_s$-group $\bar G$ (\S\ref{section F-liens}). This allows us to define the notion of an \emph{outer action} of $F$ on a smooth $k$-group $G$ as a particular type of $F$-kernel that appears naturally when studying extensions of $F$ by $G$. Later, we associate to an $F$-kernel a nonabelian 2-cohomology set (\S\ref{section H2 et ext}) and, in the case of an outer action, we compare this cohomology set with the set of extensions of $F$ by $G$ (\S\S\ref{section twists}--\ref{section gps commutatifs}). Finally we describe the extensions by $G$ via its smooth center (\S\ref{section H2L et H2Z pour F-liens}). This last subsection is not used in the proof of our main theorem and hence it can be skipped in the first lecture.

In section \ref{section reduction}, we prove our main result on the ``reduction'' of extensions (Theorem \ref{theoreme reduction ext}). More precisely, given any extension of $k$-groups
\[1\to G \to H\to F\to 1,\]
there exists a finite smooth $k$-subgroup $S$ of $G$ and an extension of $F$ by $S$ pulling back the original extension. Moreover, under further hypotheses, the result gives an explicit bound on the order of $S$.

Finally, in section \ref{section finitude}, we study the finiteness of the set of extensions of $F$ by $G$. It turns out that this set is actually always finite when $k$ is a finite field and, when $G$ is linear, this is quite often the case also for other fields (Theorem \ref{theoreme finitude}). Here, the description given in \S\ref{section H2L et H2Z pour F-liens} is important for the proof of these results.

\paragraph*{Acknowledgements}
The author would like to warmly thank Michel Brion for the intense email conversations and for his constant interest and support on this work, as well as Zinovy Reichstein, Jean-Pierre Serre and Roland L\"otscher for interesting discussions and for pointing out references to previous work on this subject.

This work was partially supported by the Fondation Math\'ematique Jacques Hadamard through the grant N\textsuperscript{o} ANR-10-CAMP-0151-02 in the ``Programme des Investissements d'Avenir''.

\section{Extensions of algebraic groups and nonabelian 2-cohomology}\label{section}
Let $k$ be a field, $k_s$ be a separable closure and $\Gamma=\gal(k_s/k)$ be the absolute Galois group. We wish to study extensions of a finite smooth algebraic $k$-group $F$ by another smooth algebraic $k$-group $G$. In this section, we define a set of nonabelian 2-cohomology that ``almost'' classifies extensions of $F$ by $G$ in this setting. Here, ``almost'' means that there may be classes of extensions that fall into the same element of this set. This shortcoming can be overcome (see \S\ref{section comparaison} below): we will see in fact that two classes of extensions will fall into the same element if and only if one of them is a very particular twist of the other one.\\

For \emph{any} $k$-group $F$, it is clear that we get an action of $\Gamma$ on $F(k_s)$. Throughout the text, we will use the notation $F_\Gamma$ for the semi-direct product $F(k_s)\rtimes\Gamma$ and we will denote by $\gamma_F$ the natural splitting $\Gamma\to F_\Gamma$. An arbitrary element of $F_\Gamma$ is written $(f,\sigma):=f\cdot\gamma_F(\sigma)$, with $f\in F(k_s)$ and $\sigma\in\Gamma$.

\subsection{$k$-kernels and $\Gamma$-kernels}\label{section k-liens}
We fix here some notations we will be using in the definition of $F$-kernels below. For details on all the notions mentioned in this section, we send the reader to \cite{SpringerH2}, \cite{Borovoi93} and \cite{FSS}. Some familiarity with these references is expected from the reader in order to follow the constructions in the subsequent sections. Mostly with the group cohomology presented in \cite[1.12--1.18]{SpringerH2} and the Galois cohomology presented in \cite[\S1]{FSS}\\

Let $\bar G$ be a smooth algebraic group over $k_s$. We denote by $\aut(\bar G)$ its group of automorphisms of $k_s$-group schemes. Consider now the group of \emph{$k$-semialgebraic automorphisms} (cf.~\cite[1.2]{FSS}), denoted $\saut(\bar G/k)$, or simply $\saut(\bar G)$ if $k$ is implicit. This group fits into an exact sequence
\begin{equation}\label{equation suite exacte avec SAut G}
1\to\aut(\bar G)\to\saut(\bar G)\rightarrow\Gamma,
\end{equation}
where $\Gamma$ denotes the absolute Galois group $\gal(k_s/k)$. We denote by $\Int(\bar G)$ the subgroup of $\aut(\bar G)$ given by inner automorphisms. This is a normal subgroup of both $\aut(\bar G)$ and $\saut(\bar G)$. Define then
\begin{align*}
\out(\bar G)&:=\aut(\bar G)/\Int(\bar G),\\
\sout(\bar G)=\sout(\bar G/k)&:=\saut(\bar G/k)/\Int(\bar G).
\end{align*}
Sequence (\ref{equation suite exacte avec SAut G}) gives then the following exact sequence
\begin{equation}\label{equation suite exacte avec SOut G}
1\to\out(\bar G)\to\sout(\bar G)\xrightarrow{q}\Gamma.
\end{equation}

Recall the definition of a $k$-kernel (cf.~\cite[1.11]{FSS}):

\begin{defi}\label{definition k-lien}
A \emph{$k$-kernel} in $\bar G$ is a group homomorphism $\kappa:\Gamma\to\sout(\bar G)$ such that
\begin{enumerate}
\item[$(i)$] $\kappa$ splits sequence (\ref{equation suite exacte avec SOut G}), i.e. $q\circ\kappa$ is the identity on $\Gamma$, and
\item[$(ii)$] there exists a section $\f:\Gamma\to\saut(\bar G)$ of \eqref{equation suite exacte avec SAut G} lifting $\kappa$ that is continuous in the sense of \cite[1.10]{FSS}.
\end{enumerate}
A pair $(\bar G,\kappa)$ as above is simply called a $k$-kernel. It will be said to be \emph{trivial} if there exists such an $\f$ which is moreover a splitting of \eqref{equation suite exacte avec SAut G}.
\end{defi}

For any smooth algebraic $k$-group $G$ there is a trivial $k$-kernel in $G_{k_s}$. Indeed, \cite[1.4]{FSS} tells us that such a group comes with a natural splitting $\f_G$ of \eqref{equation suite exacte avec SAut G} for $\bar G=G_{k_s}$. This induces a trivial kernel that we will denote by $\kappa_G$. Conversely (cf.~\cite[1.15]{FSS}), every splitting of \eqref{equation suite exacte avec SAut G} induces a $k$-form of $\bar G$. Note that a trivial kernel may have more than one associated $k$-form.\\

Back to a general smooth $k_s$-group $\bar G$, there is a group homomorphism (cf.~\cite[1.3]{FSS})
\begin{equation}\label{equation morphisme r}
r:\saut(\bar G)\to\aut(\bar G(k_s)),
\end{equation}
which may not be injective in general, as it is the case for finite algebraic groups. However, since $\ker(r)$ and $\Int(\bar G)$ are both normal in $\saut(\bar G)$ and have trivial intersection, one can easily see that the subgroup generated by them is isomorphic to their direct product. In particular, we see that $r$ factors through the quotient by $\Int(\bar G)$, giving a homomorphism $\bar r:\sout(\bar G)\to\out(\bar G(k_s))$. This tells us that a $k$-kernel in $\bar G$ is in particular (via $\bar r\circ\kappa$) a $\Gamma$-kernel in $\bar G(k_s)$ in the sense of \cite[1.12]{SpringerH2}. From now on, we will always omit $\bar r$ and simply denote by $\kappa$ the corresponding $\Gamma$-kernel.

For a given $k$-kernel $(\bar G,\kappa)$, one may consider the set $Z^2(k,\bar G,\kappa)$ of $2$-cocycles and the cohomology set $H^2(k,\bar G,\kappa)$ is then defined as the set of equivalence classes of $2$-cocycles, cf.~\cite[1.17]{FSS}. One can also consider the corresponding sets $Z^2(\Gamma,\bar G(k_s),\kappa)$ and $H^2(\Gamma,\bar G(k_s),\kappa)$ in the context of group cohomology, as defined in \cite[1.14--1.15]{SpringerH2}. The latter are defined as follows: if $G(k_s)$ is considered as a discrete group and $\aut(\bar G(k_s))$ is given the weak topology with respect to the evaluation maps, then an element of $Z^2(\Gamma,\bar G(k_s),\kappa)$ is a pair $(\f,\g)$ of continuous maps
\[\f:\Gamma\to\aut(\bar G(k_s))\quad\text{and}\quad\g:\Gamma\times\Gamma\to \bar G(k_s),\]
satisfying the following relations, for $\sigma,\tau,\upsilon\in\Gamma$,\footnote{These are the relations given in \cite[1.5]{Borovoi93}, which differ by a sign from the ones in \cite{SpringerH2} and \cite{FSS}.}
\begin{gather}
\f_\sigma\!\!\!\mod\Int(\bar G)=\kappa_\sigma,\label{equation proprietes 2 cocycles cong avec kappa}\\
\f_{\sigma\tau}=\int(\g_{\sigma,\tau})\circ \f_\sigma\circ \f_\tau,\label{equation proprietes 2 cocycles f morphisme a intg pres}\\
\g_{\sigma,\tau\upsilon}\f_\sigma(\g_{\tau,\upsilon})=\g_{\sigma\tau,\upsilon}\g_{\sigma,\tau}.\label{equation proprietes 2 cocycles eq 2 cocycles}
\end{gather}
A cocycle $(\f,\g)$ is equivalent to $(\f',\g')$ if there exists a continuous map $c:\Gamma\to G(k_s)$ such that, for $\sigma,\tau\in\Gamma$,
\begin{align}
\f'_\sigma &=(c\cdot \f)_\sigma :=\int(c_\sigma)\circ \f_\sigma,\label{equation equivalence 2-cocycles f}\\
\g'_{\sigma,\tau} &=(c\cdot g)_{\sigma,\tau} :=c_{\sigma\tau}\g_{\sigma,\tau}\f_\sigma(c_\tau)^{-1}c_{\sigma}^{-1}.\label{equation equivalence 2-cocycles g}
\end{align}

A priori we should follow \cite{FSS} and not \cite{SpringerH2} when working with Galois cohomology. However, from what we said above about the arrow \eqref{equation morphisme r} we also deduce that, given elements in $\aut(\bar G(k_s))$ and $\sout(\bar G)$ with the same image in $\out(\bar G(k_s))$, there is a \emph{unique} element in $\saut(\bar G)$ lifting both of them. Thus, by equation \eqref{equation proprietes 2 cocycles cong avec kappa}, one sees that for a given cocycle $(\f,\g)$ as above we may assume that $\f$ takes values in the group $\saut(\bar G)$ in a canonical way. Using \cite[1.13]{FSS}, one deduces a bijection between 2-cocycles for a given $k$-kernel in the sense of \cite[1.17]{FSS} and 2-cocycles for the corresponding $\Gamma$-kernel in the sense of \cite[1.14--1.15]{SpringerH2}. We will hence identify these two sets hereafter, as well as the correponding $H^2$ sets, and work in the group cohomology setting.

Recall finally that the subset $N^2(k,\bar G,\kappa)$ of neutral elements of $H^2(k,\bar G,\kappa)$ is defined as those classes that are represented by a cocycle $(\f,\g)$ such that $\g=1$.

\subsection{Actions by group automorphisms}\label{section k-actions}
Recall that a $k$-action of a $k$-group $G$ on a $k$-scheme $V$ is a $k$-morphism $a:G\times_k V\to V$ such that the following diagrams commute
\begin{equation}\label{diagramme k action}
\xymatrix{
G\times_k G\times_k V \ar[rr]^>>>>>>>>>{m_G\times\id_V} \ar[d]_{\id_G\times a} && G\times_k V \ar[d]^a \\
G\times_k V \ar[rr]^a && V,
}
\quad
\xymatrix{
\spec k\times_k V \cong V \ar[rr]^<<<<<<<<<{\id_V} \ar[d]_{e\times \id_V} && V \ar@{=}[d] \\
G\times_k V \ar[rr]^{a} && V,
}
\end{equation}
where $e$ is the neutral element in $G$ and $m_G:G\times_k G\to G$ denotes the multiplication morphism. We will
denote by $a_2$ the natural $k$-action $G\times_k V\times_k V\to V\times_k V$ of $G$ on $V\times_k V$ obtained by
acting via $a$ on each component of $V\times_k V$.

\begin{defi}\label{definition action alg}
Let $F,G$ be arbitrary algebraic $k$-groups. We will say that $F$ \emph{acts on $G$ by group automorphisms} if there is a $k$-action $a:F\times_k G\to G$ such that the following diagram commutes
\begin{equation}\label{diagramme action de groupes}
\xymatrix{
F\times_k G\times_k G \ar[rr]^>>>>>>>>>{\id_F\times m_G} \ar[d]_{a_2} && F\times_k G \ar[d]^a \\
G\times_k G \ar[rr]^{m_G} && G.
}
\end{equation}
\end{defi}

\begin{pro}\label{proposition k actions}
Let $F,G$ be \emph{smooth} algebraic $k$-groups. Denote by $\f_G$ the splitting of sequence \eqref{equation suite exacte avec SAut G} naturally associated to the trivial $k$-kernel $(G_{k_s},\kappa_G)$ (see section \ref{section k-liens}). Assume that $F$ is finite. Then the following are equivalent:
\begin{enumerate}
\item $F$ acts on $G$ by group automorphisms.
\item there is a morphism of $\Gamma$-groups $F(k_s)\to\aut(G_{k_s})$, where $\Gamma$ acts on $\aut(G_{k_s})$ by conjugation via $\f_G$,
\item there is a group homomorphism $F_\Gamma\to\saut(G_{k_s}/k)$ sending $F(k_s)$ into $\aut(G_{k_s})$ and $\gamma_F(\Gamma)$ identically to $\f_G(\Gamma)$ (see notations at the beginning of section \ref{section}).
\end{enumerate}
\end{pro}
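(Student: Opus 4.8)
The plan is to separate the geometric equivalence $(1)\Leftrightarrow(2)$ from the purely group-theoretic equivalence $(2)\Leftrightarrow(3)$.

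For $(1)\Leftrightarrow(2)$, the crucial input is that $F$ is finite and smooth, so $F_{k_s}$ is the constant group scheme $\coprod_{f\in F(k_s)}\spec k_s$. First I would base-change a given action $a$ to $k_s$: the morphism $a_{k_s}\colon F_{k_s}\times_{k_s}G_{k_s}\to G_{k_s}$ then decomposes into a family $(a_f)_{f\in F(k_s)}$ of $k_s$-morphisms $G_{k_s}\to G_{k_s}$. The action axioms (\ref{diagramme k action}) translate into $a_e=\id$ and $a_{ff'}=a_f\circ a_{f'}$, so that each $a_f$ is invertible, while the group-automorphism diagram (\ref{diagramme action de groupes}) says exactly that each $a_f$ is a homomorphism of $k_s$-groups; hence $\varphi\colon f\mapsto a_f$ is a group homomorphism $F(k_s)\to\aut(G_{k_s})$. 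That $\varphi$ is $\Gamma$-equivariant for the conjugation action via $\f_G$ is precisely the rationality of $a$ over $k$: since $\f_G(\sigma)$ realizes the semialgebraic automorphism attached to $\sigma$ on $G_{k_s}$, a short computation on $k_s$-points gives $a_{{}^\sigma f}=\f_G(\sigma)\circ a_f\circ\f_G(\sigma)^{-1}$, which is (2). Conversely, given such a $\varphi$, I would define $a_{k_s}$ componentwise by $\varphi$; its homomorphism property yields both diagrams over $k_s$, and then I descend $a_{k_s}$ to a $k$-morphism $a\colon F\times_k G\to G$ using that morphisms between schemes satisfy Galois descent (they form a sheaf for the fpqc topology), the $\Gamma$-equivariance of $\varphi$ being exactly the invariance needed. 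No effectivity for objects is required, since $F\times_k G$ and $G$ already live over $k$; and the descended $a$ still satisfies (\ref{diagramme k action}) and (\ref{diagramme action de groupes}) because these hold after the faithfully flat base change $\spec k_s\to\spec k$.

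For $(2)\Leftrightarrow(3)$, I would invoke the universal property of the semidirect product $F_\Gamma=F(k_s)\rtimes\Gamma$. Reading (3) as asking for a $\psi$ lying over the identity of $\Gamma$ (the natural compatibility encoded by ``sending $\gamma_F(\Gamma)$ identically to $\f_G(\Gamma)$''), the restriction $\varphi:=\psi|_{F(k_s)}$ automatically takes values in $\ker(\saut(G_{k_s})\to\Gamma)=\aut(G_{k_s})$, since $F(k_s)=\ker(F_\Gamma\to\Gamma)$. A homomorphism $\psi$ extending $\f_G$ is then the same datum as a homomorphism $\varphi\colon F(k_s)\to\aut(G_{k_s})$ satisfying $\varphi({}^\sigma f)=\f_G(\sigma)\,\varphi(f)\,\f_G(\sigma)^{-1}$, via the formula $\psi(f,\sigma)=\varphi(f)\,\f_G(\sigma)$; checking that this $\psi$ is multiplicative reduces exactly to that identity, coming from the conjugation of $F(k_s)$ by $\gamma_F(\sigma)$ inside $F_\Gamma$. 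But this identity is precisely the condition that $\varphi$ be a morphism of $\Gamma$-groups for the conjugation action via $\f_G$, which is (2).

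The step $(2)\Leftrightarrow(3)$ is formal once the semidirect-product bookkeeping is set up. I expect the main obstacle to lie in $(1)\Leftrightarrow(2)$, and within it in the descent step: one must verify carefully that ``conjugation by $\f_G(\sigma)$'' on $\aut(G_{k_s})$ coincides with the Galois twist acting on the collection $(a_f)$, so that the $\Gamma$-equivariance of $\varphi$ genuinely is the descent datum making $a_{k_s}$ arise from a $k$-morphism. This is where the precise meaning of $\f_G$ as the natural splitting attached to the $k$-form $G$ (\cite[1.4]{FSS}) has to be used, rather than an arbitrary section of (\ref{equation suite exacte avec SAut G}).
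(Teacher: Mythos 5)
Your proposal is correct and follows essentially the same route as the paper: the equivalence $(2)\Leftrightarrow(3)$ is disposed of by the formal semidirect-product computation, and $(1)\Leftrightarrow(2)$ rests on $F_{k_s}$ being constant together with Galois descent (the paper phrases the forward direction via pullbacks along $K$-points for separable $K/k$ and the converse by reducing to constant $F$ and descending via \cite[V.20]{SerreGpsAlg}, which matches your componentwise decomposition of $a_{k_s}$ plus descent of the morphism). The only differences are cosmetic (the order of operations in the descent step), and your explicit reading of $(3)$ as a homomorphism lying over $\id_\Gamma$ is indeed the intended one.
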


\begin{proof}
$1\Rightarrow 2$: Let $K/k$ be a separable extension and let $f:\spec K\to F$ be a $K$-point of $F$. Consider the pullback of $a$ by this point. This is a $K$-morphism $a_f:G_K\to G_K$ which is seen to be an automorphism of $G_K$ thanks to diagram \eqref{diagramme action de groupes}. Moreover, diagram \eqref{diagramme k action} tells us that $f\mapsto a_f$ defines a group homomorphism $F(K)\to \aut_K(G_K)$. Functoriality of $a$ with respect to $K$ tells us that these homomorphisms are compatible with the action of $\Gamma$ for finite separable extensions, hence a $\Gamma$-group morphism $F(k_s)\to\aut(G_{k_s})$.\\
$2\Leftrightarrow 3$: This follows basically from the definition of a semi-direct product.\\
$2\Rightarrow 1$: We may assume that $F$ is a constant group. The existence of the $k$-action in the general case follows by Galois descent from the $\Gamma$-equivariance of the morphism (cf.~\cite[V.20]{SerreGpsAlg}). Under this assumption, the scheme $F\times_k G$ is a finite set of copies $G_f$ of $G$ indexed by the $k$-points $f$ of $F$. Hence, in order to define the $k$-morphism $a$ it will suffice to define a $k$-morphism for each copy. We send then each $G_f=G$ to $G$ via the image of $f$ in $\aut(G_{k_s})$ (which is actually a $k$-morphism by $\Gamma$-equivariance). One can easily check that this morphism satisfies diagrams \eqref{diagramme k action} and \eqref{diagramme action de groupes}.\\
\end{proof}

\subsection{$F$-kernels and outer actions}\label{section F-liens}
Let us now try to define an \emph{outer} action of $F$ on $G$, that is, an action up to inner automorphisms, by analogy with the group cohomology and the Galois cohomology settings.

Proposition \ref{proposition k actions} suggests the following definition.

\begin{defi}
Let $F$ be a finite smooth algebraic $k$-group and $\bar G$ a smooth algebraic $k_s$-group. An \emph{$(F,k)$-kernel} (or $F$-kernel, for short) in $\bar G$ is a group homomorphism $\kappa:F_\Gamma\to\sout(\bar G)$ sending $F(k_s)$ to $\out(\bar G)$ and such that the restriction of $\kappa$ to $\gamma_F(\Gamma)$ is a $k$-kernel.

For $G$ a smooth algebraic $k$-group, we define an \emph{outer action} of $F$ on $G$ to be an $F$-kernel in $G_{k_s}$ such that its restriction to $\gamma_F(\Gamma)$ is the trivial kernel $\kappa_G$ (cf.~section \ref{section k-liens}).
\end{defi}

By the same argument given in section \ref{section k-liens}, it is clear from the definition that an $F$-kernel $\kappa$ in $\bar G$ corresponds to an $F_\Gamma$-kernel in $\bar G(k_s)$ in the sense of \cite[1.12]{SpringerH2}, abusively still denoted by $\kappa$.\\

Consider now an extension of smooth algebraic $k$-groups
\[1\to G\to H\to F\to 1,\]
with finite $F$. We claim that such an extension defines an outer action of $F$ on $G$. Since $G$ is a $k$-group, we get the natural trivial $k$-kernel $\kappa_G$ in $G_{k_s}$ (cf.~section \ref{section k-liens}) given by a splitting $\f_G$ of sequence \eqref{equation suite exacte avec SAut G}. Now, since $G$ is smooth, we have $H^1_{\text{fppf}}(k_s,G)=H^1_{\text{\'et}}(k_s,G)=0$ (cf.~\cite[III.4]{Milne}) and hence $H(k_s)$ surjects onto $F(k_s)$ by the classic exact sequence in fppf cohomology. Choose then arbitrary preimages $\hat f\in H(k_s)$ for $f\in F(k_s)$ and set, for $(f,\sigma)\in F_\Gamma=F(k_s)\rtimes\Gamma$,
\[\f_{(f,\sigma)}:=\int(\hat f)\circ(\f_G)_\sigma\in\saut(G_{k_s}).\]
Define $\kappa:F_\Gamma\to\sout(G_{k_s})$ as the composition of the map $\f$ with the natural projection $\saut(G_{k_s})\to\sout(G_{k_s})$. One easily verifies then that $\kappa$ is a group homomorphism restricting to $\kappa_G$ on $\gamma_F(\Gamma)$ and hence an outer action. This $F$-kernel induced by $H$ is easily seen to be independent of the choice of the preimages.

\subsection{Nonabelian 2-cohomology and extensions}\label{section H2 et ext}
Recall that an $F$-kernel in $\bar G$ corresponds to an $F_\Gamma$-kernel in $\bar G(k_s)$. Then one can define cocycles and cohomology classes in the group-cohomological way, following \cite{SpringerH2}.

\begin{defi}\label{defintion F-cocycles}
For $\kappa$ an $F$-kernel in $\bar G$, we define $Z^2(F,\bar G,\kappa)$ as the set of pairs $(\f,\g)$ of continuous maps
\[\f:F_\Gamma\to\aut(\bar G(k_s))\quad\text{and}\quad\g:F_\Gamma\times F_\Gamma\to \bar G(k_s),\]
satisfying equations \eqref{equation proprietes 2 cocycles cong avec kappa} to \eqref{equation proprietes 2 cocycles eq 2 cocycles} for elements in $F_\Gamma$. Consequently, the set $H^2(F,\bar G,\kappa)$ is defined as the equivalence classes in $Z^2(F,\bar G,\kappa)$ using equations \eqref{equation equivalence 2-cocycles f} and \eqref{equation equivalence 2-cocycles g} for continuous maps $c:F_\Gamma\to G(k_s)$.
The set $N^2(F,\bar G,\kappa)$ is that of \emph{neutral} classes, i.e. those that are represented by a cocycle $(\f,\g)$ with $\g=1$.

If $\kappa$ corresponds to an outer action of $F$ on a smooth algebraic $k$-group $G$, then we will replace $\bar G$ by $G$ in the notations above. Still in this last case, we denote by $\ext(F,G,\kappa)$ the set of $k$-isomorphism classes of extensions of $F$ by $G$ inducing $\kappa$.
\end{defi}

Given this definition, we see that, for a smooth algebraic $k$-group $G$ with an outer action $\kappa$ of $F$, the set $H^2(F,G,\kappa)$ classifies group extensions of $F_\Gamma$ by $G(k_s)$ (cf.~\cite[1.14--1.15]{SpringerH2}), while $N^2(F,G,\kappa)$ classifies those that are split. We want to compare this set with the set $\ext(F,G,\kappa)$.\\

Consider then a class $\xi\in\ext(F,G,\kappa)$ and an extension
\[1\to G\to H_\xi \to F\to 1,\]
representing $\xi$. As in last section, by smoothness of $G$, we get the same exact sequence at the level of $k_s$-points
\[1\to G(k_s)\to H_\xi(k_s) \to F(k_s)\to 1,\]
where the homomorphisms are $\Gamma$-equivariant. Consider now the extension
\[1\to H_\xi(k_s)\to E_\xi\to \Gamma\to 1,\]
given by the semi-direct product $E_\xi:=H_\xi(k_s)\rtimes\Gamma$ associated to the natural action of $\Gamma$ on
$H_\xi(k_s)$. It is clear that the subgroup $G(k_s)$ of $H_\xi(k_s)$ is normal in $E_\xi$. Therefore, we get an extension
\[1\to G(k_s)\to E_\xi\to F_\Gamma\to 1,\]
which defines an $F_\Gamma$-kernel in $G(k_s)$ (cf.~\cite[1.13]{SpringerH2}). Now it is easy to verify that this $F_\Gamma$-kernel is $\kappa$. We get then a class $\varphi(\xi)\in H^2(F,G,\kappa)$ which is clearly independent of the choice of the extension $H_\xi$ representing $\xi$. We have thus defined a map
\[\varphi:\ext(F,G,\kappa)\to H^2(F,G,\kappa),\]
which clearly sends split extensions to neutral classes.

\subsection{Twisting by torsors on the smooth center of $G$}\label{section twists}
In order to understand which extensions of $F$ by $G$ map to the same class in $H^2(F,G,\kappa)$ by this comparison map $\varphi$, we describe here an action of the group $H^1(k,Z)$ on $\ext(F,G,\kappa)$, where $Z$ is the \emph{smooth} center of $G$.

\begin{defi}
Let $G$ be a smooth algebraic $k$-group. We define the \emph{smooth center} $Z$ of $G$ as the largest smooth central $k$-subgroup of $G$.
\end{defi}

\begin{rem}
When $k$ is perfect, if we denote by $Z'$ the schematic center of $G$, then the smooth center $Z$ corresponds to the reduced subscheme $Z'_\red$. Otherwise it corresponds to the subscheme of $Z$ given by \cite[Lem.~C.4.1]{PseudoRedGps}.
\end{rem}

Consider again an arbitrary element $\xi\in\ext(F,G,\kappa)$ and its associated extension
\[1\to G\to H_\xi \to F\to 1.\]
The smooth center $Z$ acts on $H_\xi$ naturally by conjugation and thus, if we consider a
class $\alpha\in H^1(k,Z)$ and a 1-cocycle $\z$ representing the image of this class in $H^1(k,H_\xi)$, we can consider the
twisted group $\asd{}{\z}{}{\xi}{H}$ (see for example \cite[I.5.3]{SerreCohGal}). This group is a $k$-form of $H_\xi$ and has $G$ as a normal $k$-subgroup since the action of $Z$ on $G$ by conjugation
is trivial and hence so is the twisting. Moreover, the quotient $\asd{}{\z}{}{\xi}{H}/G$ is clearly $k$-isomorphic to
$F$ since the action of the whole group $G$ on $H_\xi$ by conjugation gets trivialized when one passes to the
quotient $H_\xi/G=F$. We have then a new extension
\[1 \to G \to \asd{}{\z}{}{\xi}{H} \to F \to 1,\]
which actually induces $\kappa$ as well. Indeed, since $H_\xi(k_s)=\asd{}{\z}{}{\xi}{H}(k_s)$, we see that the outer action $\asd{}{\z}{}{}{\kappa}$ of $F$ induced by $\asd{}{\z}{}{\xi}{H}$ coincides trivially with $\kappa$ over $F(k_s)\subset F_\Gamma$, while over $\gamma_F(\Gamma)\subset F_\Gamma$ it also coincides with $\kappa$ merely by the definition of twisting: we have just modified the action of $\gamma_F(\Gamma)$ by \emph{inner} automorphisms.

The extension $H_\xi$ represents then an element $\alpha\cdot\xi$ in $\ext(F,G,\kappa)$. Note that this notation is not abusive, since the choice of another cocycle $\z'$ would give a twisted group $\asd{}{\z'}{}{\xi}{H}$ which would be isomorphic to $\asd{}{\z}{}{\xi}{H}$ and one can easily see that the isomorphism will reduce to identity on $G$ and on $F$, giving thus an isomorphism of extensions. One can then verify that this construction defines an action of $H^1(k,Z)$ on $\ext(F,G,\kappa)$.

\subsection{Comparing $\ext(F,G,\kappa)$ and $H^2(F,G,\kappa)$}\label{section comparaison}
Given an outer action $\kappa:F_\Gamma\to\sout(G_{k_s})$ of $F$ on $G$ as above, its restriction to the subgroup $\gamma_F(\Gamma)\subset F_\Gamma$ is $\kappa_G$ by definition. We have then the natural restriction map
\[H^2(F,G,\kappa)=H^2(F_\Gamma,G(k_s),\kappa)\xrightarrow{\res}H^2(\Gamma,G(k_s),\kappa_G)= H^2(k,G),\]
where $H^2(k,G)$ is the classic set defined in \cite[1.5]{Borovoi93}. Note that this is indeed a map since it corresponds, following \cite[1.18]{SpringerH2}, to the relation $(\gamma_F,\id_G)_*^2$ and $\id_G$ is trivially surjective. It is moreover a surjective map since there is a natural inflation map that goes the other way and defines a section. Recall that $H^2(k,G)$ admits a natural neutral element corresponding to the semi-direct product $G_\Gamma=G(k_s)\rtimes\Gamma$. We can thus consider $H^2(k,G)$ as a pointed set by taking this class as the base point.\footnote{Note that there may be other neutral elements in $H^2(k,G)$. However, they define other actions of $\Gamma$ over $G(k_s)$ (or, if one wishes, other $k$-forms of $G$) and hence our base point is uniquely defined.}

It follows from the definition of this relation that a class in $H^2(F,G,\kappa)$ represented by an extension $E$ of $F_\Gamma$ by $G(k_s)$ maps to this base point if and only if it is represented by a 2-cocycle $(\f,\g)\in Z^2(F,G,\kappa)$ such that
\[\g_{(1,\sigma_1),(1,\sigma_2)}=1,\quad\forall\,\sigma_1,\sigma_2\in \Gamma,\]
and such that $\f_{(1,\sigma)}$ corresponds to the natural action of $\sigma$ on $G(k_s)$. This is also equivalent to the existence of a commutative diagram
\[\xymatrix@R=5mm{
1 \ar[r] & G(k_s) \ar[r] \ar@{=}[d] & G_\Gamma \ar[d] \ar[r] & \Gamma \ar[r] \ar[d]^{\gamma_F} \ar@/_1pc/[l]_{\gamma_G} & 1 \\
1 \ar[r] & G(k_s) \ar[r]  & E \ar[r] & F_\Gamma \ar[r] & 1.
}\]

\begin{pro}\label{proposition ext non abelien}
Let $F$ be a finite smooth $k$-group, $G$ be a smooth algebraic $k$-group and let $Z$ be its smooth center. Assume that there is an outer action $\kappa$ of $F$ on $G$. Then the map $\varphi$ defined in section \ref{section H2 et ext} passes to the quotient of $\ext(F,G,\kappa)$ by the action of $H^1(k,Z)$ and defines a bijection
\begin{equation}\label{isomorphisme}
H^1(k,Z)\backslash\ext(F,G,\kappa)\xrightarrow{\sim} \ker[H^2(F,G,\kappa)\xrightarrow{\res} H^2(k,G)],
\end{equation}
where $\ker(\res)$ means the preimage of the base point in $H^2(k,G)$.
\end{pro}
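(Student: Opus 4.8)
The plan is to establish the four ingredients separately: that $\varphi$ takes values in $\ker(\res)$, that it is constant on the $H^1(k,Z)$-orbits (so that it descends to the quotient on the left of \eqref{isomorphisme}), and finally that the induced map is surjective and injective onto $\ker(\res)$. That $\im\varphi\subseteq\ker(\res)$ is immediate from the construction of $\varphi$: the extension $E_\xi=H_\xi(k_s)\rtimes\Gamma$ maps to $F_\Gamma$ through the map induced by $H_\xi(k_s)\to F(k_s)$ and $\id_\Gamma$, so the preimage of $\gamma_F(\Gamma)$ is exactly $G(k_s)\rtimes\Gamma=G_\Gamma$ with its natural Galois action, which is the chosen base point of $H^2(k,G)$. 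For $H^1(k,Z)$-invariance I would take $\alpha\in H^1(k,Z)$ with representing cocycle $\z$ and compare $E_\xi$ with $E_{\alpha\cdot\xi}=\asd{}{\z}{}{\xi}{H}(k_s)\rtimes\Gamma$. Since $\asd{}{\z}{}{\xi}{H}(k_s)=H_\xi(k_s)$ as a group, with $\sigma$ now acting by $h\mapsto\z_\sigma\,{}^\sigma h\,\z_\sigma^{-1}$, the map $(h,\sigma)\mapsto(h\z_\sigma,\sigma)$ is checked to be an isomorphism of extensions of $F_\Gamma$ by $G(k_s)$ precisely because $\z$ is a $1$-cocycle; it is the identity on $G(k_s)$ and on $F_\Gamma$ because $\z_\sigma\in Z(k_s)$ and $\z_1=1$. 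Hence $\varphi(\alpha\cdot\xi)=\varphi(\xi)$.

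For surjectivity I would start from a class in $\ker(\res)$, represented by an extension $E$ of $F_\Gamma$ by $G(k_s)$ whose restriction to $\gamma_F(\Gamma)$ is $G_\Gamma$; equivalently, there is a splitting $\gamma_G\colon\Gamma\to E$ lifting $\gamma_F$, as in the diagram preceding the statement. Let $\tilde H\subseteq E$ be the preimage of $F(k_s)\subseteq F_\Gamma$, an extension of the finite group $F(k_s)$ by $G(k_s)$. As $F(k_s)$ is normal in $F_\Gamma$, the subgroup $\gamma_G(\Gamma)$ normalises $\tilde H$ and meets it trivially, so $E=\tilde H\rtimes\gamma_G(\Gamma)$. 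After arranging (as in \S\ref{section k-liens}) that the cocycle data $(\f,\g)$ attached to $E$ has $\f$ valued in $\saut(G_{k_s})$, I would give $\tilde H$ the structure of an algebraic $k_s$-group $H_{k_s}\cong\coprod_{f\in F(k_s)}G_{k_s}$, with multiplication determined by the $\f_{(f,1)}\in\aut(G_{k_s})$ and the $\g_{(f_1,1),(f_2,1)}\in G(k_s)$; conjugation by $\gamma_G(\sigma)$ then defines a continuous splitting $\Gamma\to\saut(H_{k_s})$, where one uses that $E\in\ker(\res)$ so that the $\Gamma$-direction of $\g$ is trivial. Galois descent (cf.~\cite[1.15]{FSS}) produces a smooth $k$-group $H$ sitting in $1\to G\to H\to F\to1$ and inducing $\kappa$, and by construction $E_{[H]}=H(k_s)\rtimes\Gamma=\tilde H\rtimes\gamma_G(\Gamma)=E$, so $\varphi([H])$ is the given class.

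For injectivity modulo $H^1(k,Z)$, suppose $\varphi(\xi_1)=\varphi(\xi_2)$ and fix an isomorphism of extensions $\Psi\colon E_{\xi_1}\xrightarrow{\sim}E_{\xi_2}$ (identity on $G(k_s)$, inducing the identity on $F_\Gamma$). Writing $\gamma_i$ for the canonical splitting $\Gamma\to E_{\xi_i}$, the elements $\Psi(\gamma_1(\sigma))$ and $\gamma_2(\sigma)$ have the same image in $F_\Gamma$, so $\Psi(\gamma_1(\sigma))=\z_\sigma\gamma_2(\sigma)$ with $\z_\sigma\in G(k_s)$, and a short computation gives $\z\in Z^1(\Gamma,G(k_s))$. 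Restricting $\Psi$ to the preimages of $F(k_s)$ yields an abstract isomorphism $\beta\colon H_{\xi_1}(k_s)\to H_{\xi_2}(k_s)$, the identity on $G(k_s)$, satisfying $\beta({}^\sigma h)=\z_\sigma\,{}^\sigma(\beta h)\,\z_\sigma^{-1}$. Taking $h\in G(k_s)$ forces $\z_\sigma$ to centralise $G(k_s)$; since $G$ is smooth and $k_s$ separably closed, $Z(k_s)$ is exactly the centre of $G(k_s)$ (by the definition of the smooth centre together with Lemma \ref{lemme CGP AppC}), whence $\z\in Z^1(\Gamma,Z(k_s))$ and $[\z]\in H^1(k,Z)$. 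Moreover $\beta$ is algebraic: trivialising the $G_{k_s}$-torsors $H_{\xi_i,k_s}\to F_{k_s}$ (possible since $H^1(k_s,G)=0$), $\beta$ takes the form $(g,f)\mapsto(gu_f,f)$ for suitable $u_f\in G(k_s)$. The displayed identity then says precisely that $\beta$ is $\Gamma$-equivariant from $H_{\xi_1}$ to the twist $\asd{}{\z}{}{\xi_2}{H}$, so by descent $H_{\xi_1}\cong\asd{}{\z}{}{\xi_2}{H}$ as extensions of $F$ by $G$; that is, $\xi_1=[\z]\cdot\xi_2$, and the two classes lie in the same $H^1(k,Z)$-orbit.

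The main obstacle is the Galois-descent step in the surjectivity argument: one must verify that the purely group-theoretic extension $\tilde H$ carries a genuinely \emph{semialgebraic} $\Gamma$-action defining a $k$-form, which is exactly why it matters that $\kappa$ be a true $k$-kernel (so the lifts $\f_{(f,\sigma)}$ can be taken in $\saut(G_{k_s})$) and that the class lie in $\ker(\res)$ (so the Galois direction of $\g$ is untwisted). The accompanying checks that the relevant maps ($\beta$ and the shift isomorphism) are algebraic rather than merely abstract, and the identification of $Z(k_s)$ with the centre of $G(k_s)$---which is what pins the ambiguity down to $H^1(k,Z)$ and nothing larger---are the remaining points that require care.
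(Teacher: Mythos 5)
Your proposal is correct and follows essentially the same route as the paper's proof: the same explicit shift isomorphism for the $H^1(k,Z)$-invariance, the same preimage-of-$F(k_s)$ construction followed by Galois descent via \cite[1.15]{FSS} for surjectivity, and the same extraction of a $1$-cocycle valued in $Z(k_s)$ (using Lemma \ref{lemme CGP AppC} to identify $Z(k_s)$ with the centre of $G(k_s)$) exhibiting the two extensions as twists of one another for injectivity. The only point to watch is a direction convention in the invariance step: with the twisted action ${}^{\sigma*}h=\z_\sigma\,{}^{\sigma}h\,\z_\sigma^{-1}$, your map $(h,\sigma)\mapsto(h\z_\sigma,\sigma)$ is an isomorphism $\asd{}{\z}{}{\xi}{E}\to E_\xi$ rather than $E_\xi\to\asd{}{\z}{}{\xi}{E}$ (the paper uses $(h,\sigma)\mapsto(h\z_\sigma^{-1},\sigma)$ for the latter), which of course establishes the same conclusion $\varphi(\alpha\cdot\xi)=\varphi(\xi)$.
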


\begin{rem}
This result, as well as Proposition \ref{proposition ext abelien} here below, can be regarded as a simpler version of \cite[Cor.~3.3.16]{DemarcheHochschild} in a much more restrictive context. In particular, like in the cited result, one can show that a class in $\ext(F,G,\kappa)$ is mapped by $\varphi$ to a neutral class in $H^2(F,G,\kappa)$ if and only if it is a twist of a split extension.
\end{rem}

\medskip

\begin{proof}
We start by showing that the bijection is actually well defined.
\paragraph*{The map \eqref{isomorphisme} is well defined:} It is clear by the construction of $\varphi$ that the
composition $\res\circ\varphi$ gives us the trivial extension $G(k_s)\rtimes\Gamma$ for any $\xi\in\ext(F,G,\kappa)$, thus $\varphi$ actually falls into $\ker(\res)$. We must show now that for $\alpha\in H^1(k,Z)$ and $\xi\in\ext(F,G,\kappa)$ we have $\varphi(\alpha\cdot\xi)=\varphi(\xi)$.\\

Let $\z\in Z^1(k,Z)$ represent $\alpha$, let
\[1 \to G \to {H}_\xi \to F \to 1\qquad (\text{resp.  } 1 \to G \to \asd{}{\z}{}{\xi}{H} \to F \to 1),\]
be an extension representing $\xi$ (resp. $\alpha\cdot\xi$) and let
\[1 \to G(k_s) \to E_\xi \to F_\Gamma \to 1\qquad (\text{resp.  } 1 \to G(k_s) \to \asd{}{\z}{}{\xi}{E} \to F_\Gamma \to 1),\]
be the extension, representing $\varphi(\xi)$ (resp. $\varphi(\alpha\cdot\xi)$), obtained by taking the semi-direct product $E_\xi={H}_\xi(k_s)\rtimes\Gamma$ (resp. $\asd{}{\z}{}{\xi}{E}=\asd{}{\z}{}{\xi}{H}(k_s)\rtimes\Gamma$).

Recall that $Z^1(k,Z)$ classifies sections of the extension $Z(k_s)\rtimes\Gamma$. In particular, we may interpret $\z$ as a section of $E_\xi={H}_\xi(k_s)\rtimes\Gamma$ via the natural inclusion. By definition of twisting, the action of $\Gamma$ on $\asd{}{\z}{}{\xi}{H}(k_s)$ is given by conjugation in $E_\xi$ via the section defined by $\z$. In particular, the two semi-direct products are isomorphic \emph{by definition}, and actually there is a commutative diagram
\[\xymatrix@R=5mm{
1 \ar[r] & H_\xi(k_s) \ar[r] \ar@{=}[d] & E_\xi \ar[r] \ar[d]^{\phi}_{\sim} & \Gamma \ar@{=}[d] \ar[r] & 1 \\
1 \ar[r] & \asd{}{\z}{}{\xi}{H}(k_s) \ar[r] & \asd{}{\z}{}{\xi}{E} \ar[r] & \Gamma \ar[r] & 1.
}\]
In particular, $\phi$ induces the identity on the subgroups $G(k_s)$ of $E_\xi$ and $\asd{}{\z}{}{\xi}{E}$. And since the image of the section $\z$ is in $Z(k_s)\rtimes\Gamma$, both sections coincide once we quotient by $G(k_s)$ on both sides, i.e.~$\phi$ induces the identity on $F(k_s)\rtimes\Gamma=F_\Gamma$. This proves that the map \eqref{isomorphisme} is well defined.

\paragraph*{The map \eqref{isomorphisme} is surjective:}
Take a class in $H^2(F,G,\kappa)$, represented by an extension $E$ of $F_\Gamma$ by $G(k_s)$, such that its image in $H^2(k,G)$ is the base point. We get then a commutative diagram
\[\xymatrix@R=5mm{
1 \ar[r] & G(k_s) \ar[r] \ar@{=}[d] & G_\Gamma \ar[d] \ar[r] & \Gamma \ar[r] \ar[d]^{\gamma_F} \ar@/_1pc/[l]_{\gamma_G} & 1 \\
1 \ar[r] & G(k_s) \ar[r]  & E \ar[r] & F_\Gamma \ar[r] & 1.
}\]
Consider now the preimage of $F(k_s)$ (as a subgroup of $F_\Gamma$) in $E$. This gives us an extension
\begin{equation}\label{extension Gamma-equivariante}
1\to G(k_s)\to \bar H\to F(k_s)\to 1,
\end{equation}
that fits into a commutative diagram
\[\xymatrix@R=5mm{
1 \ar[r] & G(k_s) \ar[r] \ar[d] & G_\Gamma \ar[d] \ar[r] & \Gamma \ar[r] \ar@{=}[d] \ar@/_1pc/[l]_{\gamma_G} & 1 \\
1 \ar[r] & \bar H \ar[r]  & E \ar[r] & \Gamma \ar[r] & 1.
}\]
We see then that the lower exact sequence is split. Hence there is a natural action of $\Gamma$ on $\bar H$, by conjugation in $E$, whose restrictions to $G(k_s)$ and $F(k_s)$ coincide with the natural action of $\Gamma$ given by the $k$-group structure of $G$ and $F$ respectively. In other words, the extension \eqref{extension Gamma-equivariante} is $\Gamma$-equivariant.

Moreover, since $F(k_s)$ is finite and smooth, $\bar H$ can be naturally given the structure of a smooth $k_s$-algebraic group: as a $k_s$-variety, it is a finite union of copies of $G_{k_s}$ (one per element of $F(k_s)$) and the morphisms giving the group structure can be easily defined using the $k_s$-automorphisms $\f_{(f,1)}$ for $f\in F(k_s)$. The action of $\Gamma$ over $\bar H$ is then seen to define semialgebraic automorphisms: this is evident for the copy of $G_{k_s}$ containing the neutral element (since the action of $\Gamma$ over $G$ is by semialgebraic automorphisms) and it can be deduced for the other components from the $k_s$-group structure of $\bar H$ by translation. We conclude by \cite[1.15]{FSS} that $\bar H$ descends into a smooth algebraic $k$-group representing an element in $\ext(F,G,\kappa)$. The fact that this element is a preimage of the given class in $H^2(F,G,\kappa)$ is obvious.

\paragraph*{The map \eqref{isomorphisme} is injective:} Let $\xi_1,\xi_2\in\ext(F,G,\kappa)$ be elements such that
$\varphi(\xi_1)=\varphi(\xi_2)$. Since we have $\varphi(\xi_i)\in\ker(\res)$, this means that we have the following
commutative diagram with exact rows:
\begin{equation}\label{diagramme prisme triangulaire}
\xymatrix@R=5mm{
1 \ar[r] & G(k_s) \ar[rr] \ar@{=}[dd] && E_{\xi_1} \ar[dd]^<<<<<<<{\phi}_<<<<<<<{\sim} \ar[rr] && F_\Gamma \ar[r] \ar@{=}[dd] & 1 \\
& \quad 1 \ar[r] & G(k_s) \ar@{=}[ul] \ar@{=}[dl] \ar'[r][rr] && G_\Gamma \ar'[r][rr] \ar[ul]_{\iota_1} \ar[dl]^{\iota_2} && \Gamma \ar[ul]_{\gamma_F} \ar[dl]^{\gamma_F} \ar[r] \ar@/_1pc/[ll]_>>>>>{\gamma_G} & 1 \\
1 \ar[r] & G(k_s) \ar[rr] && E_{\xi_2} \ar[rr] && F_\Gamma \ar[r] & 1
}
\end{equation}
where, for $i=1,2$, $E_{\xi_i}$ is the semi-direct product $H_{\xi_i}(k_s)\rtimes\Gamma$. Let $(\f^i,\g^i)\in Z^2(F,G,\kappa)$ be a cocycle representing $\varphi(\xi_i)$. The diagram tells us that we may choose them in such a way that we have, for $\sigma,\tau\in\Gamma$,
\begin{align}
\f^1_{(1,\sigma)}&=\f^2_{(1,\sigma)}=\mathrm{int}(\gamma_G(\sigma)) ;\label{egalite des f sur Gamma}\\
\g^1_{(1,\sigma),(1,\tau)}&=\g^2_{(1,\sigma),(1,\tau)}=1, \label{nullite de g sur Gamma}
\end{align}
where, in order to lighten notation, we have identified $E_{\xi_1}$ and $E_{\xi_2}$ via $\phi$ and also $\gamma_G(\sigma)$ with its image in $E_{\xi_i}$ via $\iota_i$. Now, since these two 2-cocycles represent the same class in $H^2(F,G,\kappa)$, there exists a continuous map $\z:F_\Gamma\to G(k_s)$ such that, for all $x,y\in F_\Gamma$
\begin{align}
\f^2_{x}&=\mathrm{int}(\z_{x})\circ\f^1_{x},\label{relation entre les f}\\
\g^2_{x,y}&=\z_{xy}\g^1_{x,y}\f^1_{x}(\z_{y})^{-1}\z_{x}^{-1}.\label{relation entre les g}
\end{align}
Let us recall how to obtain such a map $\z$. The cocycles $(\f^i,\g^i)$ are actually dependant of the choice of respective sections $s_i:F_\Gamma\to E_{\xi_i}$, both compatible with $\gamma_G$ in order to satisfy \eqref{egalite des f sur Gamma} and \eqref{nullite de g sur Gamma}. Writing $(g,x):=g\cdot s_i(x)$ for $g\in G(k_s)$ and $x\in F_\Gamma$, the group $E_{\xi_i}$ can be seen as the set $G(k_s)\times F_\Gamma$ with the group law given by
\[(g_1,x)\cdot(g_2,y)=(g_1\f^i_{x}(g_2)\g^i_{x,y},xy),\]
for $g_1,g_2\in G(k_s)$, $x,y\in F_\Gamma$. In order to respect the commutativity of diagram \eqref{diagramme prisme triangulaire}, the isomorphism $\phi$ must then satisfy, for $g\in G(k_s)$, $x=(f,\sigma)\in F_\Gamma$,
\begin{align*}
\phi(g,(1,1))&=(g,(1,1)),\\
\phi(1,(f,\sigma))&=(\z_{(f,\sigma)},(f,\sigma)),
\end{align*}
with $\z_{(f,\sigma)}\in G(k_s)$. This is precisely our map $\z:F_\Gamma\to G(k_s)$. Note that in particular we have
\begin{equation}\label{relation entre phi et z}
\phi(\iota_1(\gamma_G(\sigma)))=\z_{(1,\sigma)}\cdot \iota_2(\gamma_G(\sigma)),\quad\forall\,\sigma\in\Gamma.
\end{equation}

Now, applying the different equalities on our 2-cocycles above, we get that:
\begin{itemize}
\item The restriction of $\z$ to $\gamma_F(\Gamma)$ takes values in $Z(k_s)$, since \eqref{egalite des f sur Gamma} and \eqref{relation entre les f} imply $\mathrm{int}(\z_{(1,\sigma)})=\mathrm{id}_G$ and we know that the $k_s$-points of $Z$ are those of the center of $G$.
\item The restriction of $\z$ to $\gamma_F(\Gamma)$ is a 1-cocycle, since \eqref{nullite de g sur Gamma} and \eqref{relation entre les g} imply
\[\z_{(1,\sigma)(1,\tau)}\f^1_{(1,\sigma)}(\z_{(1,\tau)})^{-1}\z_{(1,\sigma)}^{-1}=1,\quad\text{i.e.}
\quad \z_{\sigma\tau}=\z_{\sigma}\asd{\sigma}{}{}{\tau}{\z},\quad \forall \sigma,\tau\in\Gamma.\]
\end{itemize}
Otherwise stated, we get that the restriction of $\z$ to $\gamma_F(\Gamma)\cong\Gamma$ is in $Z^1(k,Z)$. We claim that this cocycle will do, i.e. if we denote
\[1\to G\to H_{\xi_i}\to F\to 1,\quad i=1,2,\]
the extensions representing respectively $\xi_1$ and $\xi_2$, we have $H_{\xi_2}\cong\asd{}{\z}{}{\xi_1}{H}$.\\

In order to prove this last assertion, it will suffice to prove that $\Gamma$ acts in the same way on $\asd{}{\z}{}{\xi_1}{H}(k_s)$ and $H_{\xi_2}(k_s)$. Notice that these two groups are identified via the isomorphism $\phi$ of diagram \eqref{diagramme prisme triangulaire} as the respective preimages of $F(k_s)\subset F_\Gamma$ in $E_{\xi_1}$ and $E_{\xi_2}$ (recall that $\asd{}{\z}{}{\xi_1}{H}(k_s)=H_{\xi_1}(k_s)$ as groups). Moreover, the action of $\Gamma$ on $E_{\xi_i}$ is given by conjugation when one considers $\Gamma$ as a subgroup via $\iota_i\circ \gamma_G$ in diagram \eqref{diagramme prisme triangulaire}. Thus, $\sigma\in\Gamma$ acts on $H_{\xi_i}(k_s)$ via $\mathrm{int}(\iota_i(\gamma_G(\sigma)))$, hence on $\asd{}{\z}{}{\xi_1}{H}(k_s)$ via $\mathrm{int}(\z_\sigma)\circ\mathrm{int}(\iota_1(\gamma_G(\sigma)))$ by the definition of twisting. Recalling then equation \eqref{relation entre phi et z}, we see immediately that these actions are the same on $H_{\xi_2}(k_s)$ and $\asd{}{\z}{}{\xi_1}{H}(k_s)$, which concludes the proof.
\end{proof}

If one is interested in the stabilizers of the action of $H^1(k,Z)$ on $\ext(F,G,\kappa)$, then the first thing one should note is that they are trivial for obvious reasons when $k=k_s$. Now, since $k$-forms are always classified by the $H^1$ of the automorphism group, we must then try to see what this group looks like.

\begin{pro}\label{prop aut E}
Let $E$ be an extension of a finite smooth $k$-group $F$ of order $n$ by a smooth algebraic $k$-group $G$ and let $Z$ be the smooth center of $G$. Then the automorphism group of the extension (i.e.~automorphisms of $E$ that induce the identity on $F$ and $G$) is isomorphic to the $k$-group $Z^1(F,Z)$ of Hochschild 1-cocycles.
\end{pro}

Recall (cf.~for instance \cite[II.3.1]{DemazureGabriel}) that the $k$-points of $Z^1(F,Z)$ are $k$-morphisms $c:F\to Z$ such that, if we denote by $a:F\times Z\to Z$ the $k$-action induced by $E$, by $\rho$ the composition
\[F\xrightarrow{\id\times c} F\times_k Z \xrightarrow{a} Z,\]
and by $m_F,m_Z$ the respective multiplication morphisms, then the following diagram commutes:
\[\xymatrix{
F\times_k F \ar[d]_{m_F} \ar[r]^{c\times\rho} & Z\times_k Z \ar[d]^{m_Z}\\
F \ar[r]^c & Z.
}\]

\begin{proof}
This a straightforward calculation. Let $\varphi:E\to E$ be an isomorphism inducing the identity on $G$ and $F$. Since all groups are smooth, the composite morphism
\[E\xrightarrow{\Delta} E\times_k E \xrightarrow{\id\times(\iota_E\circ\varphi)} E\times_k E\xrightarrow{m_E} E,\]
where $\Delta$ is the diagonal morphism and $\iota_E,m_E$ are the inverse and multiplication in $E$, is easily seen to fall into the smooth center $Z$ of $G\subset E$. Moreover, it also passes to the quotient by $G$ and hence defines a $k$-morphism $F\to Z$ which corresponds to an element of $Z^1(F,Z)$.

On the other hand, a morphism $c:F\to Z$ in $Z^1(F,Z)$ induces an isomorphism $\varphi$ of $E$ fixing both $F$ and $G$ by considering the composition
\[E\xrightarrow{\id\times(c\circ\pi)} E\times Z\xrightarrow{m_E} E,\]
where $\pi$ denotes the projection $E\to F$. These two constructions are the inverse of each other.
\end{proof}

Having proved this, we see that every element in $\ext(F,G,\kappa)$ has the same automorphism group and hence $H^1(k,Z^1(F,Z))$ acts naturally on $\ext(F,G,\kappa)$. Then, by construction, we get the following.

\begin{cor}\label{cor aut E}
The action of $H^1(k,Z)$ on $\ext(F,G,\kappa)$ is via its image on $H^1(k,Z^1(F,Z))$ through the coboundary morphism
\[Z\to Z^1_0(F,Z):z\mapsto (f\mapsto z\asd{f}{}{-1}{}{z}).\]
In particular, the stabilizers are \emph{all the same} and include the image of $H^1(k,Z^F)$ in $H^1(k,Z)$, where $Z^F$ denotes the $k$-subgroup of $F$-invariant elements.\qed
\end{cor}

\begin{rem}
Denote by $B^1(F,Z)$ the image of the coboundary morphism. The stabilizers could then be bigger if the map $H^1(k,B^1(F,Z))\to H^1(k,Z^1(F,Z))$ is not injective, which amounts to the non-surjectivity of the map $Z^1(F,Z)(k)\to H^1_0(\bar F,\bar Z)^\Gamma$, where $H^1_0$ denotes the Hochschild cohomology. This does not seem to be evident to verify however.
\end{rem}

\subsection{The case of a commutative group}\label{section gps commutatifs}
In the particular case of a smooth commutative group $A$, we have $Z=A$ and the outer action of $F$ on $A$ becomes an action by group automorphisms by Proposition \ref{proposition k actions}. It is well known that in this setting the sets $\ext(F,A,\kappa)$, $H^2(F,A,\kappa)$ and $H^i(k,A)$ have a natural group structure (see for example \cite[XVII, App.~I]{SGA3} for the first one, \cite[I.2, II.1]{SerreCohGal} for the second and third ones). One can easily see then that the map
\begin{align*}
H^1(k,A)&\xrightarrow{\psi}\ext(F,A,\kappa)\\
\alpha&\mapsto\alpha\cdot 0,
\end{align*}
is a group homomorphism, where $0$ represents the trivial element in $\ext(F,A,\kappa)$ (i.e. the class of the semidirect product). In the same fashion, the maps
\begin{align*}
\ext(F,A,\kappa)&\xrightarrow{\varphi} H^2(F,A,\kappa),\\
H^2(F,A,\kappa)&\xrightarrow{\res} H^2(k,A),
\end{align*}
defined as above, are also group homomorphisms. For the former, one only needs to remark that $H^2(F,A,\kappa)$ is described by extensions of $F_\Gamma$ by $A(k_s)$ and the way of mutiplying extensions in both cases is the same. Proposition \ref{proposition ext non abelien} then becomes:

\begin{pro}\label{proposition ext abelien}
Let $A$, $F$, be smooth algebraic $k$-groups such that $A$ is commutative, $F$ is finite of order $n$ and $F$ acts on $A$ by group automorphisms (via $\kappa$, as above). Then the sequence
\[H^1(k,A)\xrightarrow{\psi}\ext(F,A,\kappa)\xrightarrow{\varphi} H^2(F,A,\kappa)\xrightarrow{\res} H^2(k,A),\]
is exact. In particular, $\ext(F,A,\kappa)$ is a torsion group and, if $H^1(k,A)$ is $d$-torsion,
then $\ext(F,A,\kappa)$ is an $nd$-torsion group.
\end{pro}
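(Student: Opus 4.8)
The plan is to deduce the exactness directly from Proposition \ref{proposition ext non abelien} and then extract the torsion statement by a restriction--corestriction argument. First I would note that in the commutative case $Z=A$, so the map of Proposition \ref{proposition ext non abelien} is a bijection $H^1(k,A)\backslash\ext(F,A,\kappa)\xrightarrow{\sim}\ker(\res)$ induced by $\varphi$ (which is constant on $H^1(k,A)$-orbits). Surjectivity gives $\im(\varphi)=\ker(\res)$, i.e. exactness at $H^2(F,A,\kappa)$. For exactness at $\ext(F,A,\kappa)$, the inclusion $\im(\psi)\subseteq\ker(\varphi)$ follows from $\varphi\circ\psi=0$: since $\varphi(\alpha\cdot\xi)=\varphi(\xi)$ we get $\varphi(\psi(\alpha))=\varphi(\alpha\cdot 0)=\varphi(0)=0$. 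Conversely, if $\varphi(\xi)=0=\varphi(0)$, then injectivity of the induced bijection forces $\xi$ to lie in the $H^1(k,A)$-orbit of $0$, which is exactly $\{\alpha\cdot 0:\alpha\in H^1(k,A)\}=\im(\psi)$. Hence $\ker(\varphi)=\im(\psi)$, and since the three maps are group homomorphisms (as recalled in \S\ref{section gps commutatifs}) we obtain a short exact sequence $0\to\im(\psi)\to\ext(F,A,\kappa)\xrightarrow{\varphi}\ker(\res)\to 0$ of abelian groups.

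Next I would treat the torsion claim. Since $A$ is commutative, $\Int(A_{k_s})$ is trivial, so $\aut(A_{k_s})=\out(A_{k_s})$ and the $\f$-component of every cocycle in $Z^2(F,A,\kappa)$ is forced to equal $\kappa$; the remaining conditions \eqref{equation proprietes 2 cocycles eq 2 cocycles} and \eqref{equation equivalence 2-cocycles g} then reduce to the usual $2$-cocycle and coboundary relations. Thus $H^2(F,A,\kappa)$ is the ordinary continuous cohomology $H^2(F_\Gamma,A(k_s))$ of the profinite group $F_\Gamma=F(k_s)\rtimes\Gamma$ with coefficients in the discrete module $A(k_s)$, and $\res$ is restriction to the open subgroup $\gamma_F(\Gamma)\cong\Gamma$. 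Because $F$ is finite, smooth and of order $n$ it is \'etale, so $|F(k_s)|=n$ and the index $[F_\Gamma:\gamma_F(\Gamma)]$ equals $n$. The standard transfer formula $\mathrm{cor}\circ\res=n\cdot\id$ for an open subgroup of finite index then shows that $\ker(\res)$ is killed by $n$.

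Finally I would combine the two parts. In the short exact sequence above, $\ker(\res)$ is $n$-torsion and $\im(\psi)$ is a quotient of $H^1(k,A)=H^1(\Gamma,A(k_s))$, which, being positive-degree Galois cohomology, is torsion; hence $\im(\psi)$ is torsion, and an extension of a torsion group by an $n$-torsion group is torsion, giving the first assertion. If moreover $H^1(k,A)$ is $d$-torsion, then $\im(\psi)$ is $d$-torsion, so for any $\xi$ the relation $n\xi\in\ker(\varphi)=\im(\psi)$ yields $nd\cdot\xi=0$, whence $\ext(F,A,\kappa)$ is $nd$-torsion. The step requiring the most care, and the one I expect to be the main obstacle, is the clean identification of the abelian nonabelian-$H^2$ with ordinary continuous group cohomology of $F_\Gamma$, together with pinning down the coefficient module, the profinite topology on $F_\Gamma$, and the index $[F_\Gamma:\gamma_F(\Gamma)]=n$, so that the restriction--corestriction formalism genuinely applies; once that is secured, the $n$-torsion of $\ker(\res)$ and hence the whole statement follow formally.
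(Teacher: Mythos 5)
Your proof is correct and takes essentially the same route as the paper: exactness is read off from the bijection of Proposition \ref{proposition ext non abelien} together with the group structures recalled in \S\ref{section gps commutatifs}, the $n$-torsion of $\im(\varphi)=\ker(\res)$ comes from restriction--corestriction for the index-$n$ open subgroup $\gamma_F(\Gamma)\subset F_\Gamma$, and the torsion of $H^1(k,A)$ from profiniteness of $\Gamma$. The paper states these steps much more tersely (declaring the final $nd$-torsion assertion ``evident''), so your extra care---identifying $H^2(F,A,\kappa)$ with continuous cohomology of the profinite group $F_\Gamma$ with discrete coefficients $A(k_s)$ so that corestriction genuinely applies---is a faithful elaboration rather than a different argument.
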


\begin{proof}
We only have to prove the last two statements. The classic restriction-corestriction argument tells us that the image of $\varphi$ is killed by $n$ since $\gamma_F(\Gamma)$ is of index $n$ in $F_\Gamma$. On the other side, it is well known that $H^1(k,A)$ is torsion simply because $\Gamma$ is profinite (cf.~\cite[I.2.2, Cor.~3]{SerreCohGal}). The last assertion is then evident.
\end{proof}

\subsection{Description of $\ext(F,G,\kappa)$ via the smooth center of $G$}\label{section H2L et H2Z pour F-liens}
It is a known fact that the $H^2$ set of a given kernel is a principal homogeneous space of the $H^2$ group of its center (cf.~for instance \cite[1.17]{SpringerH2}). Considering that this has a clear translation into the language of group extensions, one would be tempted to see if there is such a relation between the set $\ext(F,G,\kappa)$ and the corresponding group $\ext(F,Z,\kappa)$, where we abusively still denote by $\kappa$ the kernel induced on the smooth center $Z$ of $G$.

In order to do so, let us recall first how to obtain an $F$-kernel (and hence an action) on the smooth center of a group with an outer action. Let $G$ be a smooth algebraic $k$-group and let $Z$ be its smooth center. Assume that there is an outer action $\kappa$ of $F$ on $G$. Then it suffices to compose $\kappa$ with the natural homomorphism $\sout(G_{k_s})\to\sout(Z_{k_s})$ induced by the same homomorphism at the level of $\saut$. Recalling then that $\sout(Z_{k_s})=\saut(Z_{k_s})$ because $Z$ is commutative, and that $\kappa$ is a splitting of \eqref{equation suite exacte avec SOut G} when restricted to $\gamma_F(\Gamma)$, we get the desired action by Proposition \ref{proposition k actions}.

As a particular case of the nonabelian group cohomology theory, we get then the following result, cf.~\cite[1.17]{SpringerH2}.

\begin{pro}\label{proposition action de H2Z sur H2G F-liens}
Let $F$ be a finite algebraic smooth $k$-group and let $G$ be a smooth algebraic $k$-group. Assume that there is an outer action $\kappa$ of $F$ on $G$. Let $Z$ be the smooth center of $G$ and denote also by $\kappa$ the algebraic action induced on $Z$. Then $H^2(F,Z,\kappa)$ acts freely and transitively on $H^2(F,G,\kappa)$.\qed
\end{pro}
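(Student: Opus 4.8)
The plan is to exhibit the action explicitly at the level of cocycles and to verify freeness and transitivity by hand, rather than merely invoking the general theory. Since $Z$ is commutative, the induced $F$-kernel on $Z$ is an honest action by Proposition \ref{proposition k actions}, so $H^2(F,Z,\kappa)$ is the ordinary group-cohomology group $H^2(F_\Gamma,Z(k_s))$ with $Z(k_s)$ viewed as an $F_\Gamma$-module via $\kappa$; its classes are represented by maps $z:F_\Gamma\times F_\Gamma\to Z(k_s)$ obeying the usual identity $z_{x,yw}\,(x\cdot z_{y,w})=z_{xy,w}\,z_{x,y}$. Given a class of $(\f,\g)\in Z^2(F,G,\kappa)$ and such a $z$, I would define the action by $(\f,\g)\cdot z:=(\f,z\cdot\g)$, where $(z\cdot\g)_{x,y}:=z_{x,y}\g_{x,y}$.

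First I would check that this is well defined. Because the values of $z$ lie in $Z(k_s)$, which by Lemma \ref{lemme CGP AppC} is precisely the center of $G(k_s)$, we have $\int(z_{x,y}\g_{x,y})=\int(\g_{x,y})$; hence relation \eqref{equation proprietes 2 cocycles cong avec kappa} and relation \eqref{equation proprietes 2 cocycles f morphisme a intg pres} are untouched, as $\f$ is unchanged. The only substantive point is relation \eqref{equation proprietes 2 cocycles eq 2 cocycles}: expanding it for $z\cdot\g$, using $\f_x(z_{y,w})=x\cdot z_{y,w}$ and pushing the central factors to the front, the noncentral part collapses to the original identity for $\g$ while the leftover central identity is exactly the module $2$-cocycle condition for $z$. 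Thus $(\f,z\cdot\g)\in Z^2(F,G,\kappa)$. To descend to cohomology I would note that applying the equivalence \eqref{equation equivalence 2-cocycles f}--\eqref{equation equivalence 2-cocycles g} with a map $c$ valued in $Z(k_s)$ fixes $\f$ (since $\int(c_x)=\id$) and multiplies $\g$ by the central coboundary $c_{xy}(x\cdot c_y)^{-1}c_x^{-1}$; so coboundaries $z=\partial c$ act trivially, and one obtains a genuine action of the group $H^2(F,Z,\kappa)$ on $H^2(F,G,\kappa)$.

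For transitivity, start with two cocycles $(\f,\g)$ and $(\f',\g')$ representing arbitrary classes. As both $\f$ and $\f'$ lift the same $\kappa$, we have $\f'_x=\int(d_x)\circ\f_x$ for some $d_x\in G(k_s)$, and replacing $(\f',\g')$ by an equivalent cocycle via $c_x=d_x^{-1}$ lets me assume $\f'=\f$. Then relation \eqref{equation proprietes 2 cocycles f morphisme a intg pres} forces $\int(\g_{x,y})=\int(\g'_{x,y})$, so $z_{x,y}:=\g'_{x,y}\g_{x,y}^{-1}$ takes values in $Z(k_s)$; reversing the computation of the previous paragraph shows $z$ is a module $2$-cocycle with $\g'=z\cdot\g$, proving transitivity. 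For freeness, suppose $(\f,z\cdot\g)$ is equivalent to $(\f,\g)$ via some $c$. Equality of the $\f$-parts gives $\int(c_x)=\id$, hence $c_x\in Z(k_s)$, and comparing the $\g$-parts yields $z_{x,y}=c_{xy}(x\cdot c_y)^{-1}c_x^{-1}$, i.e. $z=\partial c$ and $[z]=0$.

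The hard part will be purely a matter of bookkeeping: matching the sign and ordering conventions of the nonabelian relations \eqref{equation proprietes 2 cocycles cong avec kappa}--\eqref{equation equivalence 2-cocycles g} with the module $2$-cocycle and $2$-coboundary identities on $Z(k_s)$, and using consistently that $\int(h)=\id$ if and only if $h\in Z(k_s)$ (Lemma \ref{lemme CGP AppC}) to travel between the nonabelian picture and the ordinary cohomology of the central module. Everything else — continuity of $z\cdot\g$ and independence of the chosen representatives — is routine.
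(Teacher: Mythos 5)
Your route is, in substance, the proof of the very result the paper merely cites: the paper's ``proof'' of this proposition is a direct appeal to Springer \cite[1.17]{SpringerH2}, after observing (as you do) that the induced kernel on $Z$ is an honest action by Proposition \ref{proposition k actions} and that $Z(k_s)$ is the centre of $G(k_s)$. Your cocycle-level verifications of well-definedness and of freeness are correct and match the standard argument, including the sign conventions. There is, however, one genuine gap, in the transitivity step. By Definition \ref{defintion F-cocycles}, equivalences of cocycles are implemented by \emph{continuous} maps $c:F_\Gamma\to G(k_s)$, so your normalization ``replace $(\f',\g')$ by an equivalent cocycle via $c_x=d_x^{-1}$'' requires a \emph{continuous} choice of elements $d_x$ with $\f'_x=\int(d_x)\circ\f_x$. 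The $d_x$ are only well defined modulo $Z(k_s)$, and weak-topology continuity of $\f$ and $\f'$ only gives that each evaluation $x\mapsto\f'_x(\f_x^{-1}(g))$ is locally constant (note $x\mapsto\f_x^{-1}(g)$ is locally constant: near $x_0$ one has $\f_x(g')=g$ for $g'=\f_{x_0}^{-1}(g)$); it does not formally follow that $x\mapsto d_x\,Z(k_s)$ is locally constant, which is exactly what a continuous lift $d$ amounts to, since $F_\Gamma$ is profinite and $G(k_s)$ is discrete. This is not mere bookkeeping: for a bare discrete group in place of $\bar G(k_s)$ (say, an infinite restricted product of centerless groups) one can write down maps $x\mapsto\int(d_x)$ that are continuous for the weak topology while $x\mapsto d_x\,Z(k_s)$ admits no locally constant representative, and transitivity in the continuous theory then genuinely fails; so some input beyond continuity is needed, and your closing remark that all continuity issues are ``routine'' passes over precisely the one that is not.

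What saves the argument is the algebraicity of $G$, and this is the lemma your proof is missing. Centralizers of $k_s$-points are closed subgroup schemes of $G_{k_s}$, so by Noetherianity there exist finitely many $g_1,\dots,g_m\in G(k_s)$ whose common scheme-theoretic centralizer is minimal among all finite such intersections; minimality together with Zariski density of $G(k_s)$ in the smooth group $G_{k_s}$ forces this intersection to be the scheme-theoretic centre, whence $\bigcap_{i}Z_{G(k_s)}(g_i)=Z(k_s)$ (using Lemma \ref{lemme CGP AppC} to identify $k_s$-points). On the open neighbourhood of $x_0$ where the finitely many evaluations $x\mapsto\f'_x(\f_x^{-1}(g_i))$ are constant, one gets $d_{x_0}^{-1}d_x\in Z(k_s)$; hence $x\mapsto d_x\,Z(k_s)$ is locally constant, and by compactness of $F_\Gamma$ it takes finitely many values, so a continuous $d$ (hence $c$) exists. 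With this inserted, your transitivity argument is complete, and the resulting $z=\g'\g^{-1}$ is automatically continuous. One last harmless imprecision: Lemma \ref{lemme CGP AppC} by itself gives $Z(k_s)=Z'(k_s)$ for the scheme-theoretic centre $Z'$; identifying this with the abstract centre of $G(k_s)$ additionally uses smoothness of $G$ (density of $k_s$-points) --- the paper makes the same identification with the same degree of brevity, so no harm done, but it is worth knowing where smoothness enters.
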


Let us now define an action of $\ext(F,Z,\kappa)$ on $\ext(F,G,\kappa)$ by following the construction in abstract group theory. Consider $\xi\in\ext(F,G,\kappa)$ and $\zeta\in\ext(F,Z,\kappa)$. Take extensions
\[1\to G\to H_\xi\to F\to 1\qquad\text{and}\qquad 1\to Z\to H_{\zeta}\to F\to 1,\]
representing $\xi$ and $\zeta$ respectively and consider their direct product
\[1\to G\times_k Z\to H_\xi\times_k H_{\zeta}\to F\times_k F\to 1.\]
Consider now the diagonal subgroup $F$ of $F\times_k F$ and take preimages in order to get 
\begin{equation}\label{extension action de Z sur G}
1\to G\times_k Z\to H \to F\to 1.
\end{equation}
Consider now the multiplication $k$-morphism $G\times_k Z\to G$. This is clearly $H$-equivariant and hence the kernel
of such a morphism is normal in $H$. One may thus take quotients by this kernel in order to get the extension
\[1\to G\to H_{\xi'}\to F\to 1,\]
representing a class $\xi'\in\ext(F,G,\kappa)$. We set then $\zeta\cdot\xi:=\xi'$. This definition is clearly
independent of the choice of the extensions representing $\xi$ and $\zeta$.\\

Fix now a class $\xi\in\ext(F,G,\kappa)$ and define the following maps:
\[\begin{array}{rccccccl}
\psi_\xi : & H^1(k,Z) & \to & \ext(F,G,\kappa) & : & \alpha & \mapsto & \alpha\cdot\xi,\\
\theta_\xi : & \ext(F,Z,\kappa) & \to & \ext(F,G,\kappa) & : & \zeta & \mapsto &\zeta\cdot\xi,\\
\theta_{\varphi(\xi)} : & H^2(F,Z,\kappa) & \to & H^2(F,G,\kappa) & : & \eta & \mapsto & \eta\cdot\varphi(\xi),\\
\theta : & H^2(k,Z) & \to & H^2(k,G) & : & \eta & \mapsto & \eta\cdot\eta_0,
\end{array}\]
where $\eta_0$ denotes the base point of $H^2(k,G)$ as defined at the beginning of section \ref{section comparaison}. The third and fourth maps are immediately seen to be bijective by Proposition \ref{proposition action de H2Z sur H2G F-liens} and \cite[1.17]{SpringerH2}.

\begin{pro}\label{proposition compatibilite ext}
Under the hypotheses of Proposition \ref{proposition action de H2Z sur H2G F-liens}, there is a commutative diagram with exact rows
\[\xymatrix@R=5mm{
\ker(\psi)\ar[r] \ar@{=}[d] & H^1(k,Z) \ar[r]^>>>>>{\psi} \ar@{=}[d] & \ext(F,Z,\kappa) \ar[d]^{\theta_\xi} \ar[r]^{\varphi} & H^2(F,Z,\kappa) \ar[d]^{\theta_{\varphi(\xi)}}_\sim \ar[r]^>>>>>{\res} & H^2(k,Z)
\ar[d]^{\theta}_\sim \\
\ker(\psi) \ar[r] & H^1(k,Z) \ar[r]^>>>>>{\psi_\xi} & \ext(F,G,\kappa) \ar[r]^{\varphi} &
H^2(F,G,\kappa) \ar[r]^>>>>>{\res} & H^2(k,G),
}\]
where the base point in $H^2(F,G,\kappa)$ is $\varphi(\xi)$. In particular, the action of $\ext(F,Z,\kappa)$ on $\ext(F,G,\kappa)$ is simply transitive.
\end{pro}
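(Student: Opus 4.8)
The plan is to treat the rows and the individual squares separately, and then read off transitivity by a diagram chase. The top row is exactly the sequence of Proposition \ref{proposition ext abelien} applied to the commutative group $A=Z$: recall that, as explained just before Proposition \ref{proposition action de H2Z sur H2G F-liens}, the outer action $\kappa$ induces a genuine action of $F$ on $Z$ by group automorphisms, so Proposition \ref{proposition ext abelien} applies verbatim and yields exactness. The bottom row is to be understood as a sequence of pointed sets based at $\varphi(\xi)$, and its exactness is just a reformulation of Proposition \ref{proposition ext non abelien}: exactness at $\ext(F,G,\kappa)$ is the statement that the fibre of $\varphi$ over $\varphi(\xi)$ equals the $H^1(k,Z)$-orbit $\im\psi_\xi$, while exactness at $H^2(F,G,\kappa)$ is the equality $\im\varphi=\ker(\res)$. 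The two rightmost vertical maps are bijections, as already noted after their definition.

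For the middle and the right squares I would argue by pure functoriality. Both the action $\theta_\xi$ of $\ext(F,Z,\kappa)$ on $\ext(F,G,\kappa)$ and the action $\theta_{\varphi(\xi)}$ of $H^2(F,Z,\kappa)$ on $H^2(F,G,\kappa)$ are defined by the very same recipe (direct product of extensions, restriction to the diagonal copy of $F$, resp.\ of $F_\Gamma$, and pushout along the multiplication $Z\times G\to G$). The comparison map $\varphi$ sends an extension $H$ to $H(k_s)\rtimes\Gamma$, an operation that commutes with direct products, with restriction along $F\hookrightarrow F\times_k F$ and with pushout along $Z\times_k G\to G$; hence $\varphi(\zeta\cdot\xi)=\varphi(\zeta)\cdot\varphi(\xi)$, which is the middle square. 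In the same way, $\res$ amounts to restricting cocycles from $F_\Gamma$ to $\gamma_F(\Gamma)$, an operation equivariant for the corresponding center actions; since $\res(\varphi(\xi))=\eta_0$, we get $\res(\eta\cdot\varphi(\xi))=\res(\eta)\cdot\eta_0=\theta(\res(\eta))$, which is the right square.

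The genuinely computational point — and the main obstacle — is the left square, i.e.\ the identity $(\alpha\cdot 0)\cdot\xi=\alpha\cdot\xi$ comparing the two a priori different ways in which $H^1(k,Z)$ acts on $\ext(F,G,\kappa)$: directly by twisting (the map $\psi_\xi$) and through $\psi$ followed by the $\ext(F,Z,\kappa)$-action (the map $\theta_\xi$). Here I would fix a $1$-cocycle $\z\in Z^1(k,Z)$ representing $\alpha$ and unwind both constructions on $k_s$-points. On one side, $\asd{}{\z}{}{\xi}{H}$ modifies the $\Gamma$-action on $H_\xi(k_s)$ by the inner automorphisms $\mathrm{int}(\z_\sigma)$. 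On the other side, $\psi(\alpha)\in\ext(F,Z,\kappa)$ is represented by the twist by $\z$ of the split extension $Z\rtimes_\kappa F$, and I would check that forming its product with $H_\xi$, restricting to the diagonal $F$ and pushing out along $Z\times_k G\to G$ feeds precisely the cocycle $\z$ into the $Z$-factor, thereby reproducing exactly the same modification of the $\Gamma$-action. This identification of the two twists is the heart of the argument.

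Finally, transitivity of the $\ext(F,Z,\kappa)$-action follows by a chase. Given $\xi,\xi'\in\ext(F,G,\kappa)$, Proposition \ref{proposition action de H2Z sur H2G F-liens} provides a (unique) $\eta\in H^2(F,Z,\kappa)$ with $\eta\cdot\varphi(\xi)=\varphi(\xi')$. As $\res(\varphi(\xi))=\res(\varphi(\xi'))=\eta_0$, the right square forces $\res(\eta)$ to be the base point, so by exactness of the top row $\eta=\varphi(\zeta)$ for some $\zeta\in\ext(F,Z,\kappa)$. The middle square then yields $\varphi(\zeta\cdot\xi)=\eta\cdot\varphi(\xi)=\varphi(\xi')$, whence by exactness of the bottom row $\xi'=\alpha\cdot(\zeta\cdot\xi)$ for some $\alpha\in H^1(k,Z)$. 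By the left square (applied with base point $\zeta\cdot\xi$) the $H^1(k,Z)$-action factors through $\psi$ and the $\ext(F,Z,\kappa)$-action, so $\xi'$, lying in the $H^1(k,Z)$-orbit of $\zeta\cdot\xi$, lies in the $\ext(F,Z,\kappa)$-orbit of $\xi$. Hence the action is transitive.
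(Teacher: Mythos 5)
Your proof is correct, and its overall skeleton is the same as the paper's (rows from Propositions \ref{proposition ext non abelien} and \ref{proposition ext abelien}, the three squares checked separately, transitivity by a chase), but you distribute the computational weight in exactly the opposite way. The paper's only explicit computation is the \emph{middle} square: starting from a cocycle $(\f,\g)$ representing $\varphi(\xi)$ and $(\f|_Z,\z)$ representing $\varphi(\zeta)$, it checks that the product--diagonal--pushout extension \eqref{extension action de Z sur G} maps under $\varphi$ to $(\f\times\f|_Z,\g\times\z)$ and hence, after pushing out along multiplication, to $(\f,\z\g)$, which is Springer's action from \cite[1.17]{SpringerH2}; the left square is dismissed there in one line (``the twisting procedure is functorial''), and transitivity with ``by diagram chasing''. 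You instead declare the middle and right squares formal and treat the left square as the heart. Both distributions work, but note that your ``pure functoriality'' argument for the middle square quietly assumes that $\theta_{\varphi(\xi)}$ is given by the same extension-level recipe as $\theta_\xi$ --- Springer \emph{defines} the $H^2$-action on cocycles by $(\f,\g)\mapsto(\f,\z\g)$, so identifying it with the product--diagonal--pushout construction is precisely the one-line check the paper performs, not a definition you may invoke; and the compatibility of $\varphi$ with the pushout uses that taking $k_s$-points is exact here (the antidiagonal kernel of $Z\times_k G\to G$ is smooth, so its $H^1$ over $k_s$ vanishes), which deserves a word. Conversely, your explicit unwinding of the left square (the cocycle $\z$ fed into the $Z$-factor reproduces the modification $\mathrm{int}(\z_\sigma)$ of the $\Gamma$-action on $H_\xi(k_s)$) is a genuine verification the paper leaves implicit, and your fully written-out transitivity chase is a welcome expansion of the paper's one-liner --- just be aware that its last step, passing from $\xi'=\alpha\cdot(\zeta\cdot\xi)$ to membership in the orbit of $\xi$, uses that $\theta$ is an honest group action (composability of the $\ext(F,Z,\kappa)$-operation), which both you and the paper leave tacit.
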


\begin{proof}
The exactness of both rows follows from Propositions \ref{proposition ext non abelien} and \ref{proposition ext abelien} and Corollary \ref{cor aut E}. The last one is needed to prove that $\ker(\psi)$ is indeed the kernel on the second row.

Commutativity in the square on the right hand side is trivial when one looks at it at the level of cocycles. The same is true for the two squares on the left hand side, by Corollary \ref{cor aut E} for the first and by functoriality of the twisting procedure for the second. We must prove then commutativity in the middle square, which is also done at the level of cocycles. Indeed, it suffices to follow the definition of the action of $\ext(F,Z,\kappa)$ over $\ext(F,G,\kappa)$ defined above. Let $(\f,\g)$ be a cocycle representing $\varphi(\xi)$. Then $\f$ restricts to $Z(k_s)$ as the natural action of $F_\Gamma$. In particular, we may denote by $(\f|_Z,\z)$ a cocycle representing $\varphi(\zeta)$ for $\zeta\in\ext(F,Z,\kappa)$. It is then easy to see that the image by $\varphi$ of the extension \eqref{extension action de Z sur G} is represented by $(\f\times\f|_Z,\g\times\z)$ and hence the cocycle representing $\varphi(\zeta\cdot\xi)$ is $(\f,\z\g)$ since $\f_x$ is compatible with the multiplication morphism for $x\in F_\Gamma$. This is precisely the cocycle representing $\theta_{\varphi(\xi)}(\varphi(\zeta))=\varphi(\zeta)\cdot\varphi(\xi)$ by \cite[1.17]{SpringerH2}, which proves commutativity.

Finally, the simple transitivity of the action of $\ext(F,Z,\kappa)$ on $\ext(F,G,\kappa)$ is easily proved by diagram chasing.
\end{proof}

\section{Reduction of extensions}\label{section reduction}
In this section, we follow the ``d\'evissage'' ideas in \cite[\S3]{SpringerH2}, in which Springer proves the following theorem in the context of $k$-kernels.

\begin{thm}\cite[Thm.~3.4]{SpringerH2}
Let $k$ be a perfect field, let $\bar G$ be a (smooth) algebraic $k_s$-group and let $\kappa$ be a $k$-kernel in
$\bar G$. Then for every $\eta\in H^2(k,\bar G,\kappa)$ there exists a finite nilpotent subgroup $\bar H$ of $\bar G$
and a $k$-kernel $\lambda$ in $\bar H$, compatible with $\kappa$, such that
$\eta\in\iota_*^2(H^2(k,\bar H,\lambda))$, where $\iota:\bar H\to \bar G$ denotes the inclusion.
\end{thm}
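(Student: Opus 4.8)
The plan is to follow the dévissage of \cite[\S3]{SpringerH2} in the spirit of the nonabelian reduction, peeling $\bar G$ apart along a characteristic series and capturing the class at each stage in a finite subgroup. First I would fix a cocycle $(\f,\g)$ representing $\eta$, with $\f:\Gamma\to\aut(\bar G(k_s))$ and $\g:\Gamma\times\Gamma\to\bar G(k_s)$. Since $\f$ and $\g$ are continuous and $\Gamma$ is profinite, both factor through a finite quotient $\gal(K/k)$, so that $\g$ takes only finitely many values. Because $k$ is perfect, $k_s$ is algebraically closed and the full structure theory of $\bar G$ is available: Chevalley's theorem provides a characteristic exact sequence $1\to\bar L\to\bar G^0\to\bar A\to 1$ with $\bar L$ connected linear and $\bar A$ an abelian variety, and $\bar L$ is in turn filtered by its unipotent radical, a maximal torus, and the semisimple part. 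Each subquotient here is characteristic, hence stable under $\kappa$ (and under $\f$ up to the coboundary freedom), so each inherits an induced $k$-kernel.

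The engine of the argument is a reduction step for a characteristic exact sequence $1\to\bar N\to\bar G\to\bar Q\to 1$. As $H^2$ is nonabelian there is no long exact sequence, so I would argue directly with the extension attached to $\eta$: in the group-cohomology picture, $\eta$ corresponds to an extension $1\to\bar G(k_s)\to E\to\Gamma\to 1$ inducing $\kappa$, and after replacing $\Gamma$ by $\gal(K/k)$ one gets a finite-index subextension. Projecting to $\bar Q$ and capturing the image class in a finite subgroup $\bar H_Q\subseteq\bar Q$ lets me replace $\bar G$ by the preimage of $\bar H_Q$, reducing to the case where $\bar Q$ is finite; the residual obstruction is then a class valued in $\bar N$, to be captured in a finite subgroup $\bar H_N\subseteq\bar N$. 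Iterating down the characteristic series reduces everything to the base cases, the semisimple part being brought back to its maximal torus through the normalizer $N(T)$.

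The base cases are the commutative groups $\gm^r$, $\bar A$, and (in characteristic $p$) the finite and unipotent commutative pieces. Here $H^2$ is ordinary Galois cohomology, hence torsion, and the Kummer sequence $1\to\bar M[m]\to\bar M\xrightarrow{m}\bar M\to 1$ shows that any $m$-torsion class lifts to the finite subgroup $\bar M[m]$, which is finite commutative, and therefore nilpotent. Each individual stripping step thus captures its part of the class in a finite commutative, hence nilpotent, subgroup.

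The hard part is guaranteeing that the subgroup one finally extracts is itself nilpotent, since an extension of a nilpotent group by a nilpotent group need not be nilpotent (already $S_3$ is an extension of $\mathbb{Z}/3$ by $\mathbb{Z}/2$), so the commutative pieces produced along the series cannot simply be stacked. The remedy is to localize at primes: working over a $p$-Sylow of $\gal(K/k)$ via a restriction–corestriction argument on the Galois side, and using that at the commutative base cases $H^2$ is an honest abelian group, one arranges that each captured subgroup is a finite $p$-group, which is automatically nilpotent, and then assembles a single finite nilpotent $\bar H$ from the resulting commuting Sylow pieces. Controlling the induced $k$-kernel $\lambda$ on $\bar H$ and checking its compatibility with $\kappa$ throughout the induction is the most delicate bookkeeping, and is where the bulk of the work lies.
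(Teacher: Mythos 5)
Your dévissage skeleton (Chevalley sequence, characteristic filtration of the linear part, conjugating a maximal torus back to itself to land in $N_G(T)$, torsion of tori and abelian varieties) is essentially the right reduction, and it matches the strategy this paper adapts in the proof of Theorem \ref{theoreme reduction ext}. But note that the paper does not reprove the statement at hand: it quotes Springer, and the genuine gap in your proposal is precisely the point where Springer's proof does something you do not. Your mechanism for nilpotency --- ``localize at primes via restriction--corestriction over a $p$-Sylow of $\gal(K/k)$, then assemble commuting Sylow pieces'' --- would fail. Corestriction simply does not exist for nonabelian $H^2$ (the paper's only restriction--corestriction argument, in Proposition \ref{proposition ext abelien}, is for commutative coefficients), and even if each localized class were captured in a finite $p$-group, nonabelian $H^2$ has no primary decomposition, so there is no way to glue the classes back over $k$, nor any reason the various $p$-subgroups of $\bar G$ normalize or commute with one another so as to generate a nilpotent subgroup realizing $\eta$. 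The actual route in the cited proof is group-theoretic, not cohomological: first reduce, by the dévissage you describe, to a \emph{finite} subgroup $\bar H_0$ capturing $\eta$, i.e.\ to a profinite extension $1\to \bar H_0(k_s)\to E'\to\Gamma\to 1$; then choose, by compactness and Zorn's lemma, a closed subgroup $S\subseteq E'$ \emph{minimal} among closed subgroups with $S\cdot \bar H_0(k_s)=E'$. Minimality forces $S\cap \bar H_0(k_s)\subseteq\Phi(S)$ (if a maximal open subgroup $M\subseteq S$ failed to contain the intersection, $M$ would again be a supplement), and a finite normal subgroup contained in the Frattini subgroup is nilpotent. That intersection is the desired $\bar H$, with $\lambda$ induced by conjugation in $S$. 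The paper's own Theorem \ref{theoreme reduction ext} is instructive on this point: having skipped this Frattini step, it only produces $S$ inside an extension of the Weyl group $W$ by torsion of $T$ --- generally \emph{not} nilpotent --- which shows the dévissage alone cannot yield nilpotency, confirming that your last paragraph is where the proof is missing an idea rather than mere bookkeeping.

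Two smaller points. First, your opening claim that $\f$ factors through a finite quotient $\gal(K/k)$ is false: only $\g$ is locally constant (continuous into the discrete group $\bar G(k_s)$ from a compact source); $\f$ is merely continuous for the weak topology on $\aut(\bar G(k_s))$, and already the natural Galois action on $\bar G(k_s)$ factors through no finite quotient. Second, the Kummer-sequence base case breaks in characteristic $p$ for the unipotent pieces: multiplication by $m$ on $\mathbb{G}_\mathrm{a}$ is the zero map when $p\mid m$ and an isomorphism with trivial kernel when $p\nmid m$, so ``any $m$-torsion class lifts to $\bar M[m]$'' gives nothing there. The correct treatment (as in Step 2 of the proof of Theorem \ref{theoreme reduction ext}) uses instead that $\g$ has finite image and that $G(k_s)$ is $p$-torsion, so the image of $\g$ generates a finite $\f$-stable subgroup, which is then descended via the analogue of Lemma \ref{lemme technique}.
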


The notation $\eta\in\iota_*^2(H^2(k,\bar H,\lambda))$ means that there exists a class $\xi\in H^2(k,\bar H,\lambda)$ that is related to $\eta$ via the relation $\iota^2_*$ (cf.~\cite[1.18]{SpringerH2}). This is equivalent to the existence of a commutative diagram
\[\xymatrix@R=5mm{
1 \ar[r] & \bar H(k_s) \ar[r] \ar[d]^{\iota} & E_\xi \ar[d] \ar[r] & \Gamma \ar[r] \ar@{=}[d] & 1 \\
1 \ar[r] & \bar G(k_s) \ar[r]  & E_\eta \ar[r] & \Gamma \ar[r] & 1,
}\]
where $\Gamma$ denotes $\gal(k_s/k)$ as always, $E_\xi$ represents $\xi$ and $E_\eta$ represents $\eta$.

\begin{rem}
Springer's theorem asserts moreover that $\bar H$ is defined over $k$. This assertion however does not make much sense unless the group $\bar G$ itself is defined over $k$ (a finite group can always be given the structure of a constant group over \emph{any} field). We decided then to take this assertion as a typo, since there is no mention of it in the proof (Springer's Proposition 3.1 and Lemmas 3.2, 3.3, which are used in his proof, do not have this assertion). One could wonder if Springer actually had a proof of the existence of finite $k$-groups factoring extensions for an arbitrary algebraic $k$-group. All the more since we show here below that the same techniques can actually give such a result.
\end{rem}

We restrict from now on to a \emph{perfect} field $k$.

\begin{thm}\label{theoreme reduction ext}
Let $k$ be a perfect field of characteristic $p\geq 0$. Let $F$ be a smooth finite $k$-group of order $n$, and $G$ an
arbitrary smooth $k$-group. Then, given an extension
\begin{equation}\label{extension theoreme}
1\to G \to H\to F\to 1,\tag{$\star$}
\end{equation}
there exists a finite smooth $k$-subgroup $S$ of $G$ and a commutative diagram with exact rows
\[\xymatrix@R=5mm{
1 \ar[r] & S \ar[d] \ar[r] & H' \ar[d] \ar[r] & F \ar@{=}[d] \ar[r] & 1 \\
1 \ar[r] & G \ar[r] & H \ar[r] & F \ar[r] & 1.
}\]
Moreover, if $G$ is linear, let $T$ be a maximal torus in $G$, $W$ be the Weyl group of $G$ (that is, the finite group of connected components of the normalizer of $T$) and $K/k$ be a separable algebraic extension splitting $T$. Denote by $r$ the rank of $T$, $w$ the order of $W$ and $d$ the degree of $K/k$. Assume that either $nw$ is prime to $p$ or that $G^\circ$ is reductive. Then one can take $S$ to be contained in an extension of $W$ by the $ndw$-torsion subgroup of $T$, hence of order dividing $(nd)^rw^{r+1}$.
\end{thm}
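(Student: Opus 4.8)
The plan is to reduce the whole statement to the construction of a single finite smooth $k$-subgroup of $H$. Observe first that producing the asserted diagram is the same as producing a finite smooth $k$-subgroup $H'\subseteq H$ whose image in $F$ is everything: given such an $H'$, one sets $S:=H'\cap G=\ker(H'\to F)$ and takes $H'\hookrightarrow H$ as the middle vertical arrow. Since $H^\circ=G^\circ$ (as $F$ is finite) maps trivially to $F$, the surjection $H\to F$ factors through $\pi_0(H)$; hence it is enough to find a finite smooth $k$-subgroup of $H$ meeting every connected component, which then surjects onto $\pi_0(H)$ and therefore onto $F$. For arbitrary smooth $G$ this existence can be obtained by a dévissage along the Chevalley structure $1\to L\to H^\circ\to B\to 1$ of the identity component ($L$ connected linear and normal, $B$ an abelian variety), combining torsion points of a maximal torus of $L$, the prime-to-$p$ torsion of $B$, and lifts of the finite group $\pi_0(H)$; this is the qualitative first assertion and yields no bound.

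For the quantitative statement I would assume $G$ linear and work with the normalizer of a maximal torus. Fix a maximal $k$-torus $T$ of $H$ (equivalently of $H^\circ=G^\circ$) and set $N:=N_H(T)$, a $k$-subgroup of $H$. The key geometric input is that $N$ meets every connected component of $H$: if $h\in H(\bar k)$, then $hTh^{-1}$ is a maximal torus of $G^\circ$, so some $g\in G^\circ(\bar k)$ conjugates it back to $T$, and $gh\in N$ lies in the component of $h$. Consequently $N\to F$ is surjective with kernel $N\cap G=N_G(T)$, so that $F\cong N/N_G(T)$. When $G^\circ$ is reductive one has $N^\circ=T$, hence $W_H:=N/T=\pi_0(N)$ is finite, fitting into $1\to W\to W_H\to F\to 1$ with $|W_H|=nw$. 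It then suffices to find a finite smooth $k$-subgroup $H'\subseteq N$ surjecting onto $W_H$: indeed $H'\twoheadrightarrow F$, and $S:=H'\cap G=H'\cap N_G(T)$ is an extension of $W$ by $H'\cap T$.

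The heart of the argument is to make $H'\cap T$ explicit, and here the abelian theory of Section \ref{section gps commutatifs} does the work. The extension $1\to T\to N\to W_H\to 1$ represents a class in $\ext(W_H,T,\kappa)$ for the conjugation action of $W_H$ on $T$ (an action by group automorphisms by Proposition \ref{proposition k actions}). By Proposition \ref{proposition ext abelien} this group is $n'd'$-torsion, where $n'=|W_H|=nw$ and $d'$ is the exponent of $H^1(k,T)$; since $T$ is split by $K/k$ of degree $d$, restriction--corestriction together with Hilbert 90 over $K$ shows $H^1(k,T)$ is $d$-torsion, so the class $[N]$ is killed by $ndw$. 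Writing $q:=ndw$, killing $[N]$ means that the pushout of $N$ along the isogeny $[q]\colon T\to T$ splits over $k$; pulling a $k$-splitting back along the surjection $N\to[q]_*N$ produces a $k$-subgroup $H'\subseteq N$ with $H'\twoheadrightarrow W_H$ and $H'\cap T=\ker[q]=T[q]=T[ndw]$. Then $H'$ is finite, and $S=H'\cap G$ is an extension of $W$ by $T[ndw]$, of order dividing $(ndw)^r\,w=(nd)^rw^{r+1}$, as claimed.

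The main obstacle is exactly the passage to the normalizer when $G^\circ$ is not reductive, for then $N^\circ$ strictly contains $T$ and $W_H$ is no longer finite. This is where the alternative hypothesis that $nw$ be prime to $p$ enters: one first divides out the unipotent radical $R_u=R_u(G^\circ)$, which is normal in $H$ and defined over $k$ since $k$ is perfect, applies the reductive case to $1\to G/R_u\to H/R_u\to F\to 1$, and then lifts the resulting finite prime-to-$p$ subgroup back through $R_u$. The lifting is unobstructed because $R_u(\bar k)$ is uniquely $\ell$-divisible for every $\ell\neq p$, so a finite group of prime-to-$p$ order has vanishing higher cohomology with values in it. A secondary technical point, also governed by the two hypotheses, is the smoothness and order of $T[ndw]$: when $p\nmid ndw$ it is étale of order $(ndw)^r$, and in general one replaces it by its maximal smooth subgroup, which only improves the divisibility bound. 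I expect that checking the prime-to-$p$/reductive dichotomy genuinely covers the lifting and splitting steps uniformly will be the most delicate part of the write-up.
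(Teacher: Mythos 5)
Your first, qualitative assertion --- which is the part of the theorem valid for arbitrary smooth $G$ and arbitrary $n$ --- is asserted rather than proved, and the ingredients you list would fail exactly in the hard case. ``Torsion points of a maximal torus of $L$'' see nothing of the unipotent part of $L$, and when $p\mid n$ your only tool for unipotent pieces, the prime-to-$p$ lifting through $R_u$, is unavailable: in characteristic $p$ an extension of a finite $p$-group by $\mathbb{G}_{\mathrm{a}}$ need not split, and the finite subgroup one must produce genuinely meets the unipotent radical (pull back $W_2\to\mathbb{G}_{\mathrm{a}}$ along $\mathbb{Z}/p\subset\mathbb{G}_{\mathrm{a}}$: every lift of a generator has order $p^2$, so any finite subgroup surjecting onto $\mathbb{Z}/p$ meets the kernel). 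The paper's missing idea here is its Step 2: for a cocycle $(\f,\g)$ representing the class, continuity of $\g\colon F_\Gamma\times F_\Gamma\to\mathbb{G}_{\mathrm{a}}(k_s)$ forces its image to be finite, and since $\mathbb{G}_{\mathrm{a}}(k_s)$ is $p$-torsion the $F_\Gamma$-stable subgroup generated by this image is still finite; Lemma \ref{lemme technique} then descends it to a finite smooth $k$-subgroup, and one inducts along the characteristic filtration of the unipotent group. Likewise your ``prime-to-$p$ torsion of $B$'' for the abelian variety is not enough: the class in $\ext(F,B,\kappa)$ is killed by some $m$, but nothing allows $m$ to be taken prime to $p$ (the torsion of $H^1(k,B)$ is unbounded and may be $p$-primary, and $p$ may divide $n$), so one must use the possibly non-smooth $B[m]$ and then pass to the maximal smooth subgroup $H'_\gr$ via Lemma \ref{lemme CGP AppC}, as in the paper's Step 1. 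Without these two arguments your write-up does not cover the first claim.

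Your quantitative half, by contrast, is essentially correct and takes a genuinely different route from the paper. Where you pass directly to $N_H(T)$, show it meets every component by conjugacy of maximal tori, and split the pushout of $1\to T\to N_H(T)\to W_H\to 1$ along $[ndw]\colon T\to T$, the paper stays inside its cohomological formalism: it modifies a representing cocycle so that every $\f_{(f,\sigma)}$ stabilizes $T$, reads off from the cocycle relation that $\g$ lands in $N_G(T)(k_s)$, applies Lemma \ref{lemme technique} to pull the extension back to one of $F$ by $N_G(T)$, and then treats $T$ and $U$ separately via a fibre product over the diagonal $F'$; your pushout is the same mechanism as the paper's use of the sequence $\ext(F',T[m],\kappa)\to\ext(F',T,\kappa)\xrightarrow{m}\ext(F',T,\kappa)$, and the bounds agree. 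Two points still need patching. First, if $p\mid d$ then $T[ndw]$, hence your $H'$, has order divisible by $p$, so before lifting through $R_u$ you must replace $H'$ by $H'_\gr$, observing that this cuts $T[ndw]$ down to its prime-to-$p$ part (the $p$-part of $T[ndw]$ is infinitesimal) and that $H'_\gr$ still surjects onto $W_H$ because $k_s=\bar k$; your lifting step quietly assumes this prime-to-$p$ order. Second, unique divisibility of $R_u(\bar k)$ only splits the lift over $\bar k$, whereas you need a splitting of $k$-group extensions, i.e.\ triviality of the relevant $\ext(H'',R_u,\kappa)$; this requires the filtration of $R_u$ by $\mathbb{G}_{\mathrm{a}}$'s together with the Hochschild--Serre argument over $F_\Gamma$ and $H^i(k,\mathbb{G}_{\mathrm{a}})=0$ --- which is, again, precisely the paper's Step 2. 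Both points are repairable, but the qualitative gap above is not a matter of polishing.
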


\begin{rem}
The first part of this result has been recently proved by Brion (cf.~\cite[Thm.~1]{Brion}) for any field $k$ and with no smoothness assumption on $G$ or $F$. It had already been claimed to be true for any perfect field $k$ more than 50 years ago by Borel and Serre (cf.~\cite[Lem.~5.11, footnote on p.~152]{BorelSerre}), although they only gave the proof for linear $G$ and $k=k_s$ of characteristic zero. Platonov gave shortly after a proof of this fact for linear groups over a perfect field (cf.~\cite[Lem.~4.14]{Platonov}). The second part has been already proved by the author in the particular case in which $G$ is itself a torus and $nd$ is prime to $p$. Other particular cases had been treated for example in \cite[Prop.~7]{Vinberg} and \cite[Lem.~5.2]{Reichstein et cie}.
\end{rem}

Before we get started, let us prove a technical lemma that will be used in the proof.

\begin{lem}\label{lemme technique}
Let $k$ be a field. Let $F$ be a smooth finite $k$-group and $G$ an arbitrary smooth $k$-group. Let $H$ be an extension of $F$ by $G$, $\kappa$ the outer action of $F$ on $G$ induced by this extension and $\xi\in\ext(F,G,\kappa)$ the corresponding class. Let $(\f,\g)\in Z^2(F,G,\kappa)$ be a cocycle representing $\varphi(\xi)\in H^2(F,G,\kappa)$ (cf.~section \ref{section H2 et ext}).

Assume that the restriction of $(\f,\g)$ to $\gamma_F(\Gamma)$ gives the trivial cocycle $(\f_G,1)$ associated to the $k$-group $G$ (cf.~section \ref{section k-liens}). Assume moreover that there exists a smooth $k_s$-subgroup $\bar M$ of $G_{k_s}$, invariant by $\f_{(f,\sigma)}$ for all $(f,\sigma)\in F_\Gamma$, such that the image of $\g$ is contained in $\bar M(k_s)$.

Then there exists a smooth $k$-subgroup $M$ of $G$ inducing the inclusion of $\bar M$ and a commutative diagram with exact rows
\[\xymatrix@R=5mm{
1 \ar[r] & M \ar[d] \ar[r] & H' \ar[d] \ar[r] & F \ar@{=}[d] \ar[r] & 1 \\
1 \ar[r] & G \ar[r] & H \ar[r] & F \ar[r] & 1.
}\]
\end{lem}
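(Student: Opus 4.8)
The goal is to descend the subgroup $\bar M\subset G_{k_s}$ to a $k$-subgroup $M\subset G$ and to produce an extension $H'$ of $F$ by $M$ mapping into $H$. The natural strategy is to use the cocycle description from section \ref{section H2 et ext}: since $\bar M$ is invariant under $\f_{(f,\sigma)}$ for all $(f,\sigma)\in F_\Gamma$ and the image of $\g$ lies in $\bar M(k_s)$, the pair $(\f|_{\bar M},\g)$ should itself define a $2$-cocycle valued in $\bar M$, i.e.\ an element of $Z^2(F,\bar M,\lambda)$ for a suitable $F$-kernel $\lambda$ on $\bar M$ obtained by restricting $\f$. This cocycle will represent a class in $H^2(F,\bar M,\lambda)$, and the corresponding group extension of $F_\Gamma$ by $\bar M(k_s)$ will sit inside $E_\xi$ as a subextension.

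\textbf{Constructing the descended subgroup $M$.}
First I would verify that the hypothesis on the restriction of $(\f,\g)$ to $\gamma_F(\Gamma)$ being $(\f_G,1)$ guarantees that $\bar M$ is stable under the natural $\Gamma$-action given by $\f_G$. Indeed, the invariance of $\bar M$ under $\f_{(1,\sigma)}=(\f_G)_\sigma$ for all $\sigma\in\Gamma$ means precisely that $\bar M$ is defined by semialgebraic automorphisms of the kind appearing in \cite[1.15]{FSS}, so $\bar M$ descends to a smooth $k$-subgroup $M$ of $G$ with $M_{k_s}=\bar M$. This is the point where I expect the argument to be cleanest, since it is a direct application of Galois descent for closed subgroups, using that the $\Gamma$-action on $G_{k_s}$ is the one coming from the $k$-structure.

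\textbf{Constructing $H'$ and the diagram.}
Once $M$ is constructed, the plan is to form the subextension of $E_\xi=H_\xi(k_s)\rtimes\Gamma$ given by the preimage of $F(k_s)$, restricted to the $\bar M(k_s)$-part. Concretely, the cocycle $(\f|_{\bar M},\g)\in Z^2(F,M,\lambda)$ defines via the procedure of section \ref{section H2 et ext} (and the surjectivity argument in the proof of Proposition \ref{proposition ext non abelien}) a $\Gamma$-equivariant extension $1\to M(k_s)\to \bar{H}'\to F(k_s)\to 1$, which carries a smooth $k_s$-group structure and, by $\Gamma$-equivariance together with \cite[1.15]{FSS}, descends to a smooth $k$-group $H'$ representing a class in $\ext(F,M,\lambda)$. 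The inclusion $M\hookrightarrow G$ together with the identity on $F$ then yields the desired commutative diagram with exact rows, where the middle vertical map $H'\to H$ is induced at the level of $k_s$-points by the inclusion $\bar M(k_s)\hookrightarrow G(k_s)$ compatibly with the cocycles; this is $\Gamma$-equivariant by construction and hence is a $k$-morphism.

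\textbf{The main obstacle.}
The delicate step is making precise that the pair $(\f|_{\bar M},\g)$ is a genuine cocycle for a well-defined $F$-kernel $\lambda$ on $\bar M$, and that the resulting extension $\bar{H}'$ embeds $\Gamma$-equivariantly into $E_\xi$ so as to give a map of extensions that descends. The invariance of $\bar M$ under all the $\f_{(f,\sigma)}$ is exactly what is needed for $\f$ to restrict to automorphisms of $\bar M$, and the containment of $\im(\g)$ in $\bar M(k_s)$ ensures that equation \eqref{equation proprietes 2 cocycles eq 2 cocycles} stays within $\bar M$; but one must check that the induced outer kernel $\lambda$ on $\bar M$ restricts to a \emph{$k$-kernel} on $\gamma_F(\Gamma)$, which follows from the hypothesis that the restriction of $(\f,\g)$ to $\gamma_F(\Gamma)$ is the trivial cocycle $(\f_G,1)$. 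The remaining verifications (that the smooth $k_s$-structure on $\bar{H}'$ is $\Gamma$-stable and descends, and that the comparison map of Proposition \ref{proposition ext non abelien} is compatible with the inclusion $M\hookrightarrow G$) are routine given the machinery already developed.
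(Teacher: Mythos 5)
Your proposal is correct and follows essentially the same route as the paper's proof: descend $\bar M$ to a $k$-subgroup $M$ by Galois descent using the triviality of $(\f,\g)$ on $\gamma_F(\Gamma)$, view $(\f,\g)$ as a cocycle in $Z^2(F,M,\kappa')$ for the restricted kernel and use the functoriality of \cite[1.18]{SpringerH2} to embed the corresponding extension of $F_\Gamma$ by $M(k_s)$ into $E_\xi$, then take the preimage of $F(k_s)$ and descend it via \cite[1.15]{FSS}. The only cosmetic difference is that the paper realizes $\bar H'$ directly as a $\Gamma$-stable subgroup of $H(k_s)$ (a finite union of translates of $M_{k_s}$ inside $H_{k_s}$), so that the middle vertical arrow $H'\to H$ is automatic rather than constructed afterwards as you do.
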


\begin{proof}
Since the restriction of $\f$ to $\gamma_F(\Gamma)$ corresponds to the natural action of $\Gamma$ on $G(k_s)$ by hypothesis and since $\bar M(k_s)$ is stable by this action, we get that $\bar M$ comes actually from a smooth $k$-subgroup $M$ of $G$. The cocycle $(\f,\g)$ can then be seen as an element of $Z^2(F,M,\kappa')$ where $\kappa'$ is the $F$-kernel (and outer action) induced by the restriction of $\f$ to $M$. By ``functoriality'' of the $H^2$ sets (cf.~\cite[1.18]{SpringerH2}) with respect to the inclusions $M(k_s)\to G(k_s)$ and $\gamma_F(\Gamma)\to F_\Gamma$, this implies the existence of a commutative diagram with exact rows
\[\xymatrix@R=3mm{
& 1 \ar[r] & M(k_s) \ar[rr] \ar@{=}[dl] \ar'[d][dd] && M_\Gamma \ar'[d][dd] \ar[rr] \ar[dl] && \Gamma \ar[r] \ar@{=}'[d][dd] \ar[dl]_{\gamma_F} \ar@/_1pc/[ll]_{\gamma_M}  & 1 \\
1 \ar[r] & M(k_s) \ar[rr] \ar[dd] && E'\ar[dd] \ar[rr] && F_\Gamma \ar@{=}[dd] \ar[r] & 1 \\
& \quad 1 \ar[r] & G(k_s) \ar'[r][rr] \ar@{=}[dl] && G_\Gamma \ar'[r][rr] \ar[dl] && \Gamma \ar[r] \ar[dl]^<<<<<{\gamma_F} \ar@/_1pc/[ll]_>>>>>>{\gamma_G} & 1 \\
1 \ar[r] & G(k_s) \ar[rr] && E \ar[rr] && F_\Gamma \ar[r] & 1,
}\]
where $E$ is an extension representing $\varphi(\xi)$ and $E'$ represents the class in $H^2(F,M,\kappa')$ given by the cocycle $(\f,\g)$, which ``maps'' to $\varphi(\xi)$.

Consider the preimages of $F(k_s)\subset F_\Gamma$ in the front rows of the diagram. We get a subgroup $\bar H'$ of $H(k_s)$ that fits into the following commutative diagram
\[\xymatrix@R=3mm{
& 1 \ar[r] & M(k_s) \ar[rr] \ar[dl] \ar'[d][dd] && M_\Gamma \ar'[d][dd] \ar[rr] \ar[dl] && \Gamma \ar[r] \ar@{=}'[d][dd] \ar@{=}[dl] \ar@/_1pc/[ll]_{\gamma_M}  & 1 \\
1 \ar[r] & \bar H'(k_s) \ar[rr] \ar[dd] && E'\ar[dd] \ar[rr] && \Gamma \ar@{=}[dd] \ar[r] & 1 \\
& \quad 1 \ar[r] & G(k_s) \ar'[r][rr] \ar[dl] && G_\Gamma \ar'[r][rr] \ar[dl] && \Gamma \ar[r] \ar@{=}[dl] \ar@/_1pc/[ll]_>>>>>{\gamma_G} & 1 \\
1 \ar[r] & H(k_s) \ar[rr] && E \ar[rr] && \Gamma \ar[r] & 1,
}\]
where we see $\bar H'$ as a smooth $k_s$-subgroup of $H_{k_s}$. This can be done since its connected component $\bar M=M_{k_s}$ already is a $k_s$-subgroup and $\bar H'$ is just a finite set of translates of $M_{k_s}$ (one per element of $F(k_s)$).

The diagram tells us that there are natural compatible splittings of the exact sequences on the front. The splitting on the lower part of the diagram gives moreover the natural action of $\Gamma$ on $H(k_s)$ (recall that $E=H(k_s)\rtimes\Gamma$ by definition). We see then that $\bar H'(k_s)$ is stable by this action. We deduce then that $\bar H'$ comes in fact from a $k$-subgroup $H'$ of $H$. This $k$-group is clearly an extension of $F$ by $M$ and fits into a commutative diagram as the one in the statement of the lemma.
\end{proof}

\begin{proof}[Proof of Theorem \ref{theoreme reduction ext}]
The extension \eqref{extension theoreme} induces an outer action $\kappa$ of $F$ on $G$ and corresponds to a class $\xi\in\ext(F,G,\kappa)$. The proof follows the reasoning of Springer and goes by successive generalizations.

\paragraph*{Step 1: the case of tori and abelian varieties.}
Assume that $G$ is either a torus or an abelian variety. Then $\kappa$ defines an action by group automorphisms of $F$ on $G$ by Proposition \ref{proposition k actions}. The result is then a consequence of Proposition \ref{proposition ext abelien}. Indeed, since $\ext(F,G,\kappa)$ is a torsion group, then there exists some $m\in\mathbb{N}$ such that $m\cdot\xi=0$. Consider then the exact sequence
\[1\to G[m]\to G\xrightarrow{m} G\to 1,\]
where $G[m]$ denotes the $m$-torsion of $G$. Using diagram \eqref{diagramme action de groupes} one can see that this action restricts to an action of $F$ on $G[m]$, abusively still denoted by $\kappa$. We get then an exact sequence (cf.~\cite[XVII, App.~I, Prop.~2.1]{SGA3})
\[\ext(F,G[m],\kappa)\to\ext(F,G,\kappa)\xrightarrow{m}\ext(F,G,\kappa).\]
We see thus that $\xi$ comes from $\ext(F,G[m],\kappa)$, which gives us a commutative diagram
\[\xymatrix@R=5mm{
1 \ar[r] & G[m] \ar[d] \ar[r] & H' \ar[d] \ar[r] & F \ar@{=}[d] \ar[r] & 1 \\
1 \ar[r] & G \ar[r] & H \ar[r] & F \ar[r] & 1.
}\]
Recall now that $S:=G[m]$ is finite for $G$ a torus or an abelian variety. Moreover, in the case of a torus, we know that $H^1(k,G)$ is $d$-torsion by the classic restriction-corestriction argument and Hilbert's theorem 90. We deduce again from Proposition \ref{proposition ext abelien} that we may take $m=nd$ and hence the order of $S$ is $(nd)^r$.

Finally, consider the smooth $k$-subgroup $H'_\red$ of $H'$ corresponding to the reduced subscheme of $H'$ (which is a subgroup since $k$ is perfect). This is easily seen to be an extension of $F$ by $S_\red$, which is a smooth $k$-subgroup of $S$ and hence of order dividing $(nd)^r$ in the case where $G$ is a torus.

\paragraph*{Step 2 : the connected unipotent case.}
Since $k$ is perfect, we know that $G$ admits a characteristic decomposition series in which every quotient is
isomorphic to $\bb{G}_\mathrm{a}$ (cf.~\cite[XVII, Cor.~4.1.3]{SGA3}).

Let us treat first the case where $G\cong\bb{G}_\mathrm{a}$. In this case, Proposition \ref{proposition ext abelien} tells us that $\ext(F,G,\kappa)$ is isomorphic to $H^2(F,G,\kappa)$ since $H^i(k,G)=0$ for $i=1,2$ (cf.~\cite[III.2.1, Prop.~6]{SerreCohGal}). Assume that $n$ is prime to $p$ and consider the Hochschild-Serre spectral sequence in group cohomology
\[H^p(\Gamma,H^q(F(k_s),G(k_s)))\Rightarrow H^{p+q}(F_\Gamma,G(k_s))=H^{p+q}(F,G,\kappa).\]
Then $H^q(F(k_s),G(k_s))=0$ for every $q>0$ since multiplication by $n$ in $G(k_s)$ is an isomorphism and $H^q(F(k_s),G(k_s))$ is $n$-torsion. Thus $H^2(F,G,\kappa)$ is isomorphic to $H^2(\Gamma,H^0(F(k_s),G(k_s)))$. Since the automorphism group of $G=\bb{G}_{\mathrm{a}}$ is $\gm$, we see by Proposition \ref{proposition k actions} that the action of $F(k_s)$ on $G(k_s)=k_s$ must be by homotheties and hence $H^0(F(k_s),G(k_s))$ is either $G(k_s)$ or $0$ and thus $H^2(\Gamma,H^0(F(k_s),G(k_s)))$ is also trivial (since $H^2(\Gamma,G(k_s))=H^2(k,G)=0$). We deduce that in this case the group $\ext(F,G,\kappa)$ is trivial and hence \eqref{extension theoreme} is a semi-direct product and one can take $H'\cong F$.

Assume now that $k$ has positive characteristic and make no assumptions on $n$. Then, if $(\f,\g)$ is a cocycle representing $\varphi(\xi)\in H^2(F,G,\kappa)$, we know that $\g:F_\Gamma^2\to G(k_s)$ has a finite image by continuity since $F_\Gamma$ is profinite. Moreover, $\f$ is \emph{equal} to $\kappa_G$ over $\gamma_F(\Gamma)$ since $G$ is abelian. Then the $F_\Gamma$-invariant subgroup $\bar S$ of $G(k_s)$ generated by the image of $\g$ is also finite since $G(k_s)$ is $p$-torsion and the action of $F_\Gamma$ over $G(k_s)$ is continuous. Thus $\bar S$ may be considered as a smooth $k_s$-subgroup. Lemma \ref{lemme technique} applies then, giving us a $k$-form $S$ of $\bar S$ and a commutative diagram with exact rows
\[\xymatrix@R=5mm{
1 \ar[r] & S \ar[d] \ar[r] & H' \ar[d] \ar[r] & F \ar@{=}[d] \ar[r] & 1 \\
1 \ar[r] & G \ar[r] & H \ar[r] & F \ar[r] & 1.
}\]

The general case is easily deduced from the previous one by induction. Indeed, let $G/G_1\cong\bb{G}_\mathrm{a}$ be the first quotient obtained from the characteristic decomposition of $G$. We assume that the result is true for $G_1$ and any finite group $F$. Since $G_1$ is characteristic in $G$, it is normal in $H$. Taking then quotients by $G_1$, we get an extension
\[1\to \bb{G}_\mathrm{a}\to H/G_1\to F\to 1,\]
from which we can get a smooth finite $k$-subgroup $S_0$ of $\bb{G}_\mathrm{a}$ and a commutative diagram
\[\xymatrix@R=5mm{
1 \ar[r] & S_0 \ar[d] \ar[r] & H_0 \ar[d] \ar[r] & F \ar@{=}[d] \ar[r] & 1 \\
1 \ar[r] & \bb{G}_\mathrm{a} \ar[r] & H/G_1 \ar[r] & F \ar[r] & 1.
}\]
Consider then the preimage of $H_0$ in $H$: it is an extension of the finite $k$-group $H_0$ by $G_1$ and hence by assumption it admits a finite $k$-subgroup $H'$ extension of $H_0$ by a finite subgroup $S_1$ of $G_1$. This $H'$ is clearly an extension of $F$ by a finite subgroup $S$ of $G$ pulling back \eqref{extension theoreme}.

Assume at last once again that $n$ is prime to $p$. Then it is clear that in each induction step above we may assume the groups $S_0$ and $S_1$ to be trivial and hence the finite group $H'$ obtained will be isomorphic to $F$. We deduce that \eqref{extension theoreme} is in this case a semi-direct product.

\paragraph*{Step 3: the linear case.}
Let $T$ be a maximal torus of $G$ and let $N=N_G(T)$ be the normalizer of $T$ in $G$. It is a smooth $k$-subgroup by \cite[VI$_\mathrm{B}$, Prop.~6.2.5; XI, Cor.~2.4]{SGA3}. The neutral connected component $N^\circ$ of $N$ is nilpotent (cf.~\cite[XII, Cor.~6.7]{SGA3}), hence it is isomorphic to a direct product $T\times_k U$ with $U$ a unipotent group (cf.~\cite[XVII, Thm.~7.3.1, Thm.~6.1.1]{SGA3}). We will first show that we can pull back \eqref{extension theoreme} to an extension of $F$ by $N$.

Let $(\f,\g)$ be a cocycle representing $\varphi(\xi)$. By Proposition \ref{proposition ext non abelien}, we may assume that $(\f,\g)$ restricted to $\gamma_F(\Gamma)\subset F_\Gamma$ is $(\f_G,1)$. Consider then, for $(f,\sigma)\in F_\Gamma$ the $k_s$-subgroup $\f_{(f,\sigma)}(T)$ of $G$. This is a maximal torus and hence it is conjugate to $T$ over $k_s$ (cf.~for example \cite[XII, Thm.~1.7]{SGA3}), i.e. there exists $c_{(f,\sigma)}\in G(k_s)$ such that $\f_{(f,\sigma)}(T)=c_{(f,\sigma)}^{-1}Tc_{(f,\sigma)}$. Now, from our assumption on $(\f,\g)$ it is easy to see that one may take $c_{(1,\sigma)}=1$ for every $\sigma\in\Gamma$. Since $\gamma_F(\Gamma)$ is clearly open in $F_\Gamma$, this implies that one may choose the $c_{(f,\sigma)}$ in order to get a continuous map $c:F_\Gamma\to G(k_s)$ which is trivial on $\gamma_F(\Gamma)$ and hence, up to changing $(\f,\g)$ by $(c\cdot\f,c\cdot\g)$ (see Definition \ref{defintion F-cocycles}), we may further assume that $\f_{(f,\sigma)}$ fixes $T$ for all $(f,\sigma)\in F_\Gamma$.

It suffices then to look at equation \eqref{equation proprietes 2 cocycles f morphisme a intg pres} to see that $\g$ must take values in $N(k_s)$. And since $\f_{(f,\sigma)}$ stabilizes $T$, it also stabilizes $N$ for all $(f,\sigma)\in F_\Gamma$. Lemma \ref{lemme technique} applies then, giving us the following commutative diagram
\[\xymatrix@R=5mm{
1 \ar[r] & N \ar[d] \ar[r] & H_0 \ar[d] \ar[r] & F \ar@{=}[d] \ar[r] & 1 \\
1 \ar[r] & G \ar[r] & H \ar[r] & F \ar[r] & 1.
}\]
One should note by the way that the $k$-form of $\bar N=N_{k_s}$ given by Lemma \ref{lemme technique} is $N$ itself since the modifications done above on $\f$ do not change its restriction to $\gamma_F(\Gamma)$ and hence do not change the natural action of $\Gamma$. It suffices then to prove the result for this extension $H_0$.\\

Recall now that $N$ sits in an extension
\[1\to T\times_k U\to N\to W\to 1,\]
with $U$ a smooth connected unipotent group. In particular, since $T\times_k U=N^\circ$ is clearly normal in $H_0$, we get an extension
\[1\to N^\circ\to H_0\to F'\to 1,\]
where $F'$ is an extension of $F$ by $W$ and hence a finite group. Consider now this last extension and take quotients
by $U$ and by $T$. We get extensions
\[1\to R\to H_R\to F'\to 1,\]
where $R$ denotes either $T$ or $U$. Steps 1 and 2 tell us then that there exists a finite smooth $k$-subgroup
$S_R\subset R$ and a commutative diagram with exact rows
\[\xymatrix@R=5mm{
1 \ar[r] & S_R \ar[d] \ar[r] & H'_R \ar[d] \ar[r] & F' \ar@{=}[d] \ar[r] & 1 \\
1 \ar[r] & R \ar[r] & H_R \ar[r] & F' \ar[r] & 1.
}\]
Consider then the direct product of $H_T$ and $H_U$. One easily sees that there is a commutative diagram
\[\xymatrix@R=5mm{
1 \ar[r] & S_T\times_k S_U \ar[d] \ar[r] & H'_T\times_k H'_U \ar[d] \ar[r] & F'\times_k F' \ar@{=}[d] \ar[r] & 1 \\
1 \ar[r] & T\times_k U \ar[r] & H_T\times_k H_U \ar[r] & F'\times_k F' \ar[r] & 1.
}\]
Consider now the diagonal subgroup $F'$ of $F'\times_k F'$ and take the preimages of this subgroup in both rows. Since $\aut(N^\circ_{k_s})=\aut(T_{k_s})\times\aut(U_{k_s})$ (both subgroups are characteristic), it is then easy to see that in the lower row we recover our group $H_0$. Setting then $S':=S_T\times_k S_U$ we get the commutative diagram
\[\xymatrix@R=5mm{
1 \ar[r] & S' \ar[d] \ar[r] & H' \ar[d] \ar[r] & F' \ar@{=}[d] \ar[r] & 1 \\
1 \ar[r] & N^\circ \ar[r] & H_0 \ar[r] & F' \ar[r] & 1.
}\]
Define $S$ as the preimage of $W\subset F'$ in $H'$. This group is smooth and finite since $W$ and $S'$ are. We see then that $H'$ is an extension of $F$ by a subgroup $S$ of $N$ pulling back the extension $H_0$, hence pulling back \eqref{extension theoreme}. Moreover, if $G^\circ$ is reductive, then $U=\{1\}$, cf.~\cite[XIX, Lem.~1.6.2]{SGA3}. And if $nw$ is prime to $p$, step 2 tells us that we may assume $S_U$ to be trivial. In both cases, $S$ becomes an extension of $W$ by $S_T$ whose order divides $(ndw)^r$ by step 1. The order of $S$ divides then $(nd)^rw^{r+1}$.

\paragraph*{Step 4: the general case.}
This is simply an application of the previous steps. Indeed, for $G$ an arbitrary smooth $k$-group there is always an
exact sequence
\[1\to G^\circ\to G\to F'\to 1,\]
where $G^\circ$ is connected and characteristic in $G$ and both $F'=G/G^\circ$ and $G^\circ$ are smooth. In particular, extension \eqref{extension theoreme} gives rise to an extension
\[1\to G^\circ\to H\to F''\to 1,\]
where $F''$ is also finite and smooth, since it is an extension of $F$ by $F'$. We easily reduce then to the case where $G$ is connected. Now, for a smooth connected $k$-group $G$, there is a unique exact sequence
\[1\to L\to G\to A\to 1,\]
where $L$ is a smooth connected linear $k$-group and $A$ is an abelian variety. (cf.~\cite{ConradChevalley}). And since $L$ is characteristic in $G$, the same induction process we applied in step 2 works here (take quotients by $L$, use step 1 to deal with $A$, then step 3 to deal with $L$). This concludes the proof.
\end{proof}

\section{Some finiteness results}\label{section finitude}
Notations are as above. Given Proposition \ref{proposition compatibilite ext} and Theorem \ref{theoreme reduction ext}, one can prove the finiteness of $\ext(F,G,\kappa)$ under convenient hypotheses.

Following Serre (cf.~\cite[III.4.2]{SerreCohGal}), we say that a field $k$ is of type (F) if it is perfect and if, for every $n\geq 1$, there exist only a finite number of subextensions of $k_s$ of degree $n$ over $k$. Examples of such fields are $\bb{R}$, $\bb{C}((T))$, finite fields and $p$-adic fields. For such fields, one can state the following result.

\begin{thm}\label{theoreme finitude}
Let $k$ be a field of type {\rm (F)} of characteristic $p\geq 0$, let $F$ be a finite smooth $k$-group of order $n$ and let $G$ be a smooth $k$-group. Assume there is an outer action $\kappa$ of $F$ on $G$. Assume moreover that one of the following holds:
\begin{enumerate}
\item $k$ is finite;
\item $G$ is linear and $n$ is prime to $p$;
\item $G$ has a reductive neutral component.
\end{enumerate}
Then the set $\ext(F,G,\kappa)$ is finite.
\end{thm}

\begin{proof}
By Proposition \ref{proposition compatibilite ext}, we know that $\ext(F,Z,\kappa)$ acts transitively on $\ext(F,G,\kappa)$. It will suffice then to prove the case where $G$ is commutative. Moreover, since the exact sequence
\[1\to G^\circ \to G\to G/G^\circ\to 1,\]
is clearly $F$-equivariant, we can reduce to the case where $G$ is either finite or connected by \cite[XVII, App.~I, Prop.~2.1]{SGA3}.\\

Assume then first that $G$ is finite and commutative. Using Proposition \ref{proposition ext abelien}, we see that we only need to show finiteness of $H^1(k,G)$ and of $\im(\varphi)=\ker(\res)$, where $\res$ is the restriction map $H^2(F,G,\kappa)\to H^2(k,G)$. The group $H^1(k,G)$ is finite by \cite[III.4.1, Prop.~8]{SerreCohGal}. As for $H^2(F,G,\kappa)=H^2(F_\Gamma,G(k_s))$, it can be calculated via the spectral sequence
\[E^{p,q}_2=H^p(\Gamma,H^q(F(k_s),G(k_s)))\Rightarrow H^{p+q}(F_\Gamma,G(k_s))=E^{p+q}.\]
Let us look at the terms with $p+q=2$. The term $E_2^{0,2}$ is finite simply because $F(k_s)$ and $G(k_s)$ are finite groups. This is also the case for $E_2^{1,1}$ by finiteness of $F(k_s)$ and $H^1(k,G)$. Finally, recall that for the term $E_2^{2,0}$ there is a natural morphism
\[E_2^{2,0}=H^2(\Gamma,H^0(F(k_s),G(k_s)))\to H^{2}(F_\Gamma,G(k_s))=E^2,\]
which corresponds to inflation for the quotient map $F_\Gamma\to\Gamma$. Composing this morphism with $\res$, and denoting by $G^F$ the biggest smooth $k$-subgroup of $G$ that is invariant by the action of $F$, this corresponds to the natural morphism $H^2(k,G^F)\to H^2(k,G)$, whose kernel is a quotient of the finite group $H^1(k,G/G^F)$ (this is once again a consequence of \cite[III.4.1, Prop.~8]{SerreCohGal}). One sees then that the part of the image of $E_2^{2,0}$ that actually falls into the kernel of $\res$ is a finite group. This proves that the whole group $\ker(\res)$ is finite.\\

Assume now that $G$ is connected and commutative. In cases 2 and 3, the result is then a corollary of Theorem \ref{theoreme reduction ext} (note that the Weyl group is trivial for $G$ connected and commutative). Indeed, then we know that there exists a positive integer $m$ and a finite smooth $k$-subgroup $S$ of the $m$-torsion of the maximal torus of $G$ such that the map
\[\ext(F,S,\kappa)\to \ext(F,G,\kappa),\]
is surjective. We are then reduced to the finite case which we have already proved.

Finally, in case 1, every extension of $F$ by $G$ induces a natural $G$-torsor over $F$ and hence an element of $H^1_{\text{\'et}}(F,G)$. This element is clearly trivial since $F$ as a variety is a product of separable extensions of $k$ and hence of finite fields, whence the triviality of the whole $H^1_{\text{\'et}}(F,G)$ by Lang's theorem (cf.~\cite{LangCorpsFinis}). We deduce that every extension is a trivial torsor and hence admits a schematic section. Thus $\ext(F,G,\kappa)$ is isomorphic to the Hochschild cohomology group $H^2_0(F,G)$ (cf.~\cite[XVII, App.~I, Prop.~3.1]{SGA3}). Let us show then that this last group is finite in this context.

Recall that Hochschild cohomology groups are obtained as the derived functors of the functor $A\mapsto A^F(k)$ (cf.~\cite[II.3.1.3]{DemazureGabriel}). Let $K/k$ be a finite Galois extension such that $F_K$ is constant. We can see this functor as the composition of three functors
\[A\mapsto A_K\mapsto [A_K^{F_K}(K)=A^F(K)]\mapsto A^F(k),\]
where the last one is simply the functor of fixed-$\Gamma_{K/k}$-points (where $\Gamma_{K/k}$ denotes of course the corresponding Galois group). Since the first of these three functors is clearly exact, Grothendieck's spectral sequence for the other two functors gives then, for our group $G$,
\[H^p(\Gamma_{K/k},H^q_0(F_K,G_K))\Rightarrow H^{p+q}_0(F,G).\]
Now, since $F_K$ is a constant group, we have $H^q_0(F_K,G_K)=H^q(F(K),G(K))$ (cf.~\cite[III.6, Prop.~4.2]{DemazureGabriel}) and hence this group is finite for every $q$ since both $F(K)$ and $G(K)$ are finite for a finite field $K$. The finiteness of $\Gamma_{K/k}$ gives then the finiteness of \emph{every} term of the spectral sequence and hence in particular of $H^2_0(F,G)$.
\end{proof}

\begin{rems}
{\bf 1.} The extra hypothesis for linear $G$ is necessary. Indeed, Proposition \ref{proposition ext abelien} and \cite[III.2.1, Prop.~6]{SerreCohGal} tell us that for $G$ connected abelian and unipotent we have that $\ext(F,G,\kappa)$ is equal to $H^2(F,G,\kappa)$. Take then for example $G=\bb{G}_\mathrm{a}$ and $F=\mathbb{Z}/p\mathbb{Z}$ with trivial action and consider the group $H^0(\Gamma,H^2(F(k_s),G(k_s)))$. Since $F$ is cyclic, we have $H^2(F(k_s),G(k_s))=\hat H^0(F(k_s),G(k_s))=G(k_s)$ and hence $H^0(\Gamma,H^2(F(k_s),G(k_s)))=G(k)=k$, that is, an infinite group if $k$ is not finite. Now, by the spectral sequence used above, this group appears as a quotient of $H^2(F,G,\kappa)$. Indeed, since the $p$-cohomological dimension of $\Gamma$ is $\leq 1$ (cf.~\cite[II.2.2, Prop.~3]{SerreCohGal}), it is easy to see that this infinite group does not disappear in the spectral sequence and hence our $H^2(F,G,\kappa)$ actually surjects onto it.\\
{\bf 2.} This last remark also tells us that the situations in which one can always reduce extensions to the \emph{same} finite $k$-subgroup in Theorem \ref{theoreme reduction ext} cannot be much more general in the case of linear $G$, otherwise we would get a proof of finiteness of $\ext(F,G,\kappa)$ in these cases too by reduction to that particular finite subgroup. One could wonder for example about the case in which $n$ is prime to $p$ but $w$, the order of the Weyl group of $G$, is not. In this case we have proved finiteness of $\ext(F,G,\kappa)$ and hence we should be able to reduce all of these extensions to a certain finite $k$-subgroup, but controlling its order would then be another issue.
\end{rems}

\end{document}